\documentclass[letterpaper, 10 pt, conference]{ieeeconf}  

\usepackage{booktabs}
\usepackage{adjustbox}

\usepackage[top=20.2mm,bottom=18mm,left=16.9mm,right=20.2mm]{geometry}

\usepackage{wrapfig}
\usepackage{lipsum}
\usepackage{amssymb}
\usepackage{amsmath}
\usepackage{siunitx}
\usepackage{mathtools}
\usepackage{hyperref}
\let\labelindent\relax
\usepackage[inline]{enumitem}
\usepackage[ruled,vlined,linesnumbered]{algorithm2e}
\usepackage[noend]{algpseudocode}
\usepackage{ifthen}
\usepackage{dsfont}
\usepackage[backend=biber,style=numeric-comp,sorting=none,maxbibnames=99]{biblatex}
\usepackage{balance}
\usepackage{dirtytalk}
\usepackage{afterpage}
\usepackage{changepage}
\usepackage{bm}

\usepackage{times}

\usepackage{svg}

\allowdisplaybreaks

\newcommand{\diag}{\text{diag}}

\newcommand{\E}{\mathbb{E}}

\newcommand{\G}{\mathcal{G}}

\newcommand{\N}{\mathbb{N}}

\newcommand{\ra}{\rightarrow}
\newcommand{\R}{\mathbb{R}}
\newcommand{\Ra}{\Rightarrow}

\newcommand{\boldQ}{\boldsymbol{Q}}
\newcommand{\boldR}{\boldsymbol{R}}

\newcommand{\boldgamma}{\boldsymbol{\gamma}}
\newcommand{\cov}{\text{Cov}}

\newcommand{\normalVec}{\mathcal{N}}

\newcommand{\vstack}{\text{vec}}

\newcommand{\tr}{\text{tr}}

\newcommand{\steady}{\text{ss}}

\newcommand{\sym}{\mathbb{S}}
\newcommand{\symPSD}{\mathbb{S}_+}
\newcommand{\symPD}{\mathbb{S}_{++}}

\newcommand{\info}{\mathcal{I}}

\newtheorem{assumption}{Assumption}
\newtheorem{proposition}{Proposition}
\newtheorem{definition}{Definition}
\newtheorem{lemma}{Lemma}
\newtheorem{theorem}{Theorem}
\newtheorem{corollary}{Corollary}

\bibliography{references}

\IEEEoverridecommandlockouts
\begin{document}

\title{\LARGE \bf
The Cost of Differential Privacy in Linear-Quadratic Dynamic Games
}

\author{
Chih-Yuan Chiu$^{1}$, Matthew Hale$^{1}$
\thanks{$^{1}$School of Electrical and Computer Engineering at the Georgia Institute of Technology, Atlanta, GA, USA.}
}

\maketitle

\thispagestyle{empty}
\pagestyle{empty}





\begin{abstract}
Multi-agent coordination often requires strategic agents to share sensitive information about their states or objectives, creating a tension between performance and privacy.
Our paper studies this tradeoff in stochastic linear-quadratic (LQ) dynamic games with heterogeneous agent objectives. 
In our framework, agents share noise-perturbed state and reference information with a cloud computer that computes feedback Nash equilibrium strategies, with the injected noise calibrated to provide differential privacy. 
We derive an analytical expression for each agent's infinite-horizon steady-state cost of privacy relative to the non-private game. 
Then, we prove that when agents' objectives are sufficiently aligned, the injection of privacy noise necessarily incurs a positive performance cost. 
In contrast, we characterize a class of games with sufficiently misaligned objectives across agents for which an agent's cost of privacy can be strictly negative. 
Thus, counterintuitively, noise can simultaneously protect privacy and improve the equilibrium performance of an agent when the objectives of interacting agents are sufficiently misaligned. 
Finally, we present numerical experiments which corroborate our theoretical contributions.
\end{abstract}

\section{Introduction}
\label{sec: Introduction}

Strategic multi-agent decision-making arises throughout cyber-physical and socio-technical systems, including communication, power, and transportation networks \cite{Ratliff2018APerspectiveonIncentiveDesignChallengesandOpportunities, Saad2009CoalitionalGameTheoryforCommunicationNetworks, Ndrio2023CoordinatedTransactionSchedulinginMultiAreaElectricityMarkets,
Roughgarden2002HowBadIsSelfishRouting, BaillonCominetti2008MarkovianTrafficEquilibrium}. 
In such systems, achieving desirable interaction outcomes often requires agents to coordinate by sharing information about their states or objectives. However, such information may be sensitive, creating a fundamental tension between the information required for effective coordination and each agent's desire to preserve its privacy \cite{McDaniel2009SecurityandPrivacyChallengesintheSmartGrid, Nissim2012PrivacyawareMechanismDesign}.

\textit{Differential privacy} provides a principled framework for navigating this tension by enabling agents to obscure sensitive information by injecting calibrated noise, while retaining its utility for downstream tasks \cite{Dwork2006DifferentialPrivacy,
Hawkins2026DifferentiallyPrivateFormationControl, 
NyPappas2014DifferentiallyPrivateFiltering,
HaleJonesLeahy2018PrivacyInFeedback,
YazdaniHale2020ErrorBoundsandGuidelinesForPrivacyCalibrationinDifferentiallyPrivateKalmanFiltering}.
Originally developed for protecting database entries, differential privacy is preserved under post-processing and robust to side information \cite{Dwork2006DifferentialPrivacy, Dwork2014AlgorithmicFoundationsOfDifferentialPrivacy, Yazdani2023DifferentiallyPrivateLQControl, Kasiviswanathan2008OnTheSematicsofDifferentialPrivacy}, and is thus well-suited to estimation, filtering, and control.
In multi-agent control contexts, for instance, agents may perturb the information they share for coordination \cite{HaleJonesLeahy2018PrivacyInFeedback, Yazdani2023DifferentiallyPrivateLQControl}.
The resulting privacy noise, however, reduces the accuracy of the information available to agents' controllers 
and thus typically degrades closed-loop system performance.

Existing work on differentially private control has largely focused on cooperative settings with shared estimation \cite{NyPappas2014DifferentiallyPrivateFiltering, YazdaniHale2020ErrorBoundsandGuidelinesForPrivacyCalibrationinDifferentiallyPrivateKalmanFiltering} or control objectives \cite{HaleJonesLeahy2018PrivacyInFeedback, Hawkins2026DifferentiallyPrivateFormationControl}.
Under heterogeneous objectives, however, privatization may benefit some agents while disadvantaging others.
It is therefore unclear whether privacy must impose a performance penalty on every agent in a noncooperative dynamic game.


To investigate this question, we utilize the lens of differential privacy to study the $N$-agent linear-quadratic Gaussian (LQG) dynamic game, a stochastic variant of the well-established linear-quadratic dynamic game framework for strategic interactions under heterogeneous agent objectives \cite{basar1998DynamicNoncooperativeGameTheory, LiuLi2025PolicieswithSparseInterAgentDependenciesinDynamicGames, 
FridovichKeil2020iLQGames}. 
In our LQG dynamic game, each agent's cost captures its preferences over state-reference tracking and control minimization objectives, with these preferences allowed to differ across agents. To facilitate coordination, agents communicate observations of their states and reference states to a cloud computer, which computes feedback Nash equilibrium strategies from the information it receives. 

Agents may inject privacy-preserving noise into the transmitted information, thereby trading privacy enforcement against the extent to which their communication is informative. 
We quantify the performance implications of this privacy mechanism through a control-theoretic \textit{cost of privacy} for each agent, defined as the difference between the agent's infinite-horizon steady-state costs with and without privacy noise.
Unlike in cooperative settings~\cite{chen23}, the cost of privacy 
is neither shared across agents nor, crucially, always positive.

Our contributions are threefold:
\begin{enumerate}
    \item We analytically quantify each agent's cost of privacy in our LQG game model, 
    generalizing
    existing cost-of-privacy analyses for cooperative linear-quadratic control to strategic settings with heterogeneous agent objectives. 
    \item We establish conditions under which sufficiently aligned agent objectives induce a positive cost of privacy, recovering analogous results expected in cooperative settings. 
    \item We characterize a class of LQG games with sufficiently misaligned objectives for which an agent's cost of privacy can instead be strictly negative. Thus, privacy noise can, in certain strategic interactions, simultaneously encoding privacy while improving cost performance.
\end{enumerate}
Numerical experiments corroborate our theoretical contributions in both the positive- and negative-cost regimes.


\subsection{Notation}
\label{subsec: Notation}

Given non-negative integers $N_1, N_2$, with $N_1 < N_2$, we write $[N_1, N_2] := \{N_1, \cdots, N_2\}$ and $[N_2] := [1, N_2]$.
Given $w_1, \cdots, w_T \in \R^n$, and $t_1, t_2 \in [0, T]$ with $t_1 \leq t_2$, we set $w_{t_1:t_2} := (w_{t_1}, \cdots, w_{t_2}) \in \R^{n(t_2 - t_1 + 1)}$.
Subscripts or superscripts omitted from indexed variables denote concatenation over the corresponding index; a superscript $-i$ denotes concatenation over all agents except $i$.
Let $\sym^n$, $\symPSD^n$, and $\symPD^n$ denote the sets of symmetric, symmetric positive semidefinite, and symmetric positive definite $n \times n$ matrices, respectively.
Given matrices $M_1, \cdots, M_n$,
we use $\diag\{M_1, \cdots, M_n\}$ to denote the block diagonal matrix whose constituent block matrices are $M_1, \cdots, M_n$, and, if $M_1, \cdots, M_n$ have the same column count, we use $\vstack(M_1, \cdots, M_n)$ to denote their vertical concatenation.
The multivariate Gaussian distribution with mean $\mu \in \R^n$ and covariance $\Sigma \in \R^{n \times n}$ 
is denoted by $\normalVec(\mu, \Sigma)$.
Given $n, m \in \N$, we respectively denote an $n \times n$ identity matrix and an $m \times n$ zero matrix by $I_{n \times n}$ and $O_{m \times n}$. 
We denote the Gaussian tail integral
by $\mathcal{Q}(z) = \frac{1}{\sqrt{2\pi}} \int_{z}^{\infty} \exp\big(-\frac{u^2}{2}\big) du$.

\section{Preliminaries}
\label{sec: Preliminaries}



\subsection{Dynamic LQG Games and Nash Equilibria Definitions}
\label{subsec: Dynamic LQG Games and Nash Equilibria Definitions}


Consider an $N$-agent, $T$-stage stochastic linear quadratic dynamic game $\G_T := (A, B, C, \boldQ, \boldR)$, as follows. For each $i \in [N] := \{1, \cdots, N\}$ and $t \in [0, T+1] := \{0, 1, \cdots, T+1\}$, let $x_t^i \in \R^{n_x^i}$, $u_t^i \in \R^{n_u^i}$, and $y_t^i \in \R^{n_y^i}$ respectively denote the state, control, and (noise corrupted) output of Agent $i$ at time $t$. 
For each $t \in [0, T]$, we 
write
$x_t := (x_t^1, \cdots, x_t^N) \in \R^{n_x}$ and $u_t := (u_t^1, \cdots, u_t^N) \in \R^{n_u}$, where $n_x := \sum_{i=1}^N n_x^i$ and $n_u := \sum_{i=1}^N n_u^i$.
We
also write
$x_{0:T+1} := (x_0, \cdots, x_T) \in \R^{n_x (T+2)}$ and $u_{0:T} := (u_0, \cdots, u_T) \in \R^{n_u (T+1)}$.

We consider stochastic time-invariant dynamics specified by a gain matrix $A \in \R^{n \times n}$ for the $N$-agent system, 
an actuation matrix $B^i \in \R^{n \times n_u^i}$ for each~$i \in [N]$ (with $B := \begin{bmatrix}
    B^1 & \cdots & B^N
\end{bmatrix} \in \R^{n \times m}$ for brevity), 
a stochastic initial state $x_0 \in \R^n$ distributed as $x_0 \sim \mathcal{N}(0, X_0)$ with $X_0 \in \symPD^n$, and system-level dynamics noise terms $w_t \sim \mathcal{N}(0, \Sigma_w)$, with $\Sigma_w \in \symPD^n$,
across $t \in [0, T]$. 
Then, $\forall t \in [0, T]$:
{
\setlength{\abovedisplayskip}{2pt}
\setlength{\belowdisplayskip}{3pt}
\begin{align} 
\label{Eqn: LQ Game, dynamics map}
    x_{t+1} &= A x_t + \sum_{i=1}^N B^i u_t^i + w_t = A x_t + B u_t + w_t.
\end{align}
}

Moreover, each agent $i \in [N]$ is associated with a cost $\tilde J_{0:T}^i(x_{0:T+1}, u_{0:T})$, given below in \eqref{Eqn: Cost, Agent i (tilde J superscript i)}, which measures the extent to which Agent $i$ values how closely each agent $j \in [N]$ tracks a \textit{reference trajectory} $\bar x_t^j$, as well as the control effort expended by each agent $j \in [N]$.
To facilitate coordination while preserving privacy, each Agent $i$ shares noisy representations of its state $x_t^i$ and reference $\bar{x}_t^i$ through a cloud computer \cite{Yazdani2023DifferentiallyPrivateLQControl}. 
Similar to \cite[Alg. 1]{Yazdani2023DifferentiallyPrivateLQControl}, the cloud in our work aggregates partial state information across agents, computes Nash equilibrium strategies (as defined and characterized below), and distributes the resulting control inputs to the agents. 
Such cloud architectures have been employed in cyber-physical systems for state monitoring and estimation \cite{Soudbakhsh2014DelayawareCodesignsforWideAreaControlofPowerGrids, Singh2016DecentralizedControlofOscillatoryDynamicsinPowerSystemsUsinganExtendedLQR, Yazdani2023DifferentiallyPrivateLQControl}.
In our setting, the cloud acts as a \say{mediator} that coordinates the $N$ agents, but is not trusted with their sensitive information.

Concretely, at each time $t$, each agent $i$ broadcasts:
{
\setlength{\abovedisplayskip}{3pt}
\setlength{\belowdisplayskip}{3pt}
\begin{alignat}{2} 
\label{Eqn: LQ Game, output map}
    y_t^i &= C^i x_t^i + v_t^i, \quad &&\forall \ i \in [N], \ t \in [0, T+1], \\
\label{Eqn: LQ Game, noisy reference}
    \tilde x_t^i &= \bar x_t^i + \bar w_t^i, &&\forall \ i \in [N], \ t \in [0, T+1].
\end{alignat}
}
$\hspace{-1mm}$to the cloud computer, where $C^i \in \R^{n_y^i \times n_x^i}$, $\Sigma_w^i \in \symPD^{n_x^i}$, and $\Sigma_v^i \in \symPD^{n_y^i}$, and $v_t^i\sim\mathcal N(0,\Sigma_v^i)$ and $\bar w_t^i\sim\mathcal N(0,\Sigma_{\bar w}^i)$ encode privacy over its state and reference trajectories, respectively.
The notation $\tilde x_t^i$ and $y_t^i$ can be aggregated across time and agent variables in the manner described in Sec. \ref{subsec: Notation}.
Similarly, we write $\Sigma_v := \diag\{\Sigma_v^1, \cdots, \Sigma_v^N \} \in \symPD^{n_y}$ and $\Sigma_{\bar w} := \diag\{\Sigma_{\bar w}^1, \cdots, \Sigma_{\bar w}^N \} \in \symPD^{n_x}$.

For each agent $i \in [N]$, to define the costs $\tilde J_{0:T}^i$, we specify $Q^i \in \sym^n$ and $R^{ij} \in \sym^{m_j}$.
We assume that $R^{ii} \in \symPD^{n_x^i}$ for each $i \in [N]$, but do not restrict either $Q^i$, or any $R^{ij}$ with $j \in [N] \backslash \{i\}$, to be positive semidefinite or positive definite.
For convenience, we introduce the shorthand notations $z_t := (w_{0:t}, v_{0:t+1}, \bar w_{0:T+1})$ for each $t \in [0, T]$.
We then define:
{
\setlength{\abovedisplayskip}{3pt}
\setlength{\belowdisplayskip}{3pt}
\begin{align}
\label{Eqn: Cost, Agent i (tilde J superscript i)}
    &\tilde J_{0:T}^i(x_{0:T+1}, u_{0:T}) \\
\nonumber
    := \ &\E_{z_T}\big[ \textstyle\sum_{t=0}^T (x_t - \tilde x_t)^\top Q^i (x_t - \tilde x_t) + \textstyle\sum_{j = 1}^N u_t^{j\top} R^{ij} u_t^j \\ \nonumber
    &\hspace{1.0cm} + (x_{T+1} - \tilde x_{T+1})^\top Q^i (x_{T+1} - \tilde x_{T+1}) \big].
\end{align}
}
We write $R^i := \diag\{R^{i1}, \cdots, R^{iN}\} \in \R^{m \times m}$, and use $\boldQ := \{Q^i, \cdots, Q^N \}$ and 
$\boldR := \{R^i, \allowbreak \cdots, R^N \}$
to respectively characterize the sets of state and control penalty matrices in the above definition of $\tilde J_{0:T}^i$.

We
consider dynamic games under \textit{closed-loop imperfect state} (CLIS) information \cite[Def. 5.2(iii)]{basar1998DynamicNoncooperativeGameTheory}, in which the cloud computer receives the information $\info_t := (y_{0:t}, \tilde x_{0:T+1})$ at each time $t \in [0, T+1]$, comprising the observation stream $\{y_0^i, \cdots, y_t^i \}$ across all agents $i \in [N]$, as well as the noise-corrupted reference trajectory $\tilde x_{0:T+1}$.
The cloud then designs \textit{output feedback strategies} $\gamma_t^i: \R^{n_y^i(t+1)} \times \R^{n_x(T+1)} \ra \R^{n_u^i}$ for each agent $i \in [N]$ and time $t \in [0, T]$, which constitute a \textit{feedback Nash equilibrium} (FBNE) under the CLIS information pattern, as described below.

\begin{definition}
(\textbf{Feedback Nash Equilibrium (FBNE) for Finite-Horizon LQG Games})
\label{Def: FBNE under CLIS}
We call a given set of output feedback strategies $\boldgamma^\star := \{\gamma_t^{i\star}: \R^{n_y^i(t+1)} \times \R^{n_x(T+1)} \ra \R^{n_u^i}: i \in [N], t \in [0, T] \}$ a FBNE for the game $\G_T = (A, B, C, \boldQ, \boldR)$ with respect to the CLIS information pattern if its cost-to-go maps $W_t^i: \R^{n_y^i(t+1)} \times \R^{n_x(T+1)} \times \R^m \ra \R$ and minimum cost-to-go maps $V_t^i: \R^{n_y^i(t+1)} \times \R^{n(T+1)} \ra \R$, defined recursively via:
{
\setlength{\abovedisplayskip}{3pt}
\setlength{\belowdisplayskip}{3pt}
\small
\begin{subequations}
\begin{align}
\label{Eqn: Cost-to-go (W) initialization, at time T+1}
    W_{T+1}^i(&\info_{T+1}, u_{T+1})  \\
    \nonumber
    &:= \E_{z_T} \big[ (x_{T+1} - \tilde x_{T+1})^\top Q^i (x_{T+1} - \tilde x_{T+1}) | \info_{T+1} \big], \\
\label{Eqn: Optimal Cost-to-go (V), Def}
    V_t^i(\info_t) &:= W_t^i\big( \info_t, \gamma_t^\star(\info_t) \big), \quad \forall \ t = T+1, \cdots, 0, \\
\label{Eqn: Cost-to-go (W) recursive DP update}
    W_t^i(\info_t, u_t) &:= \E_{z_t}\big[ (x_t - \bar x_t)^\top Q^i (x_t - \bar x_t) \\
    \nonumber
    &\hspace{1cm} + u_t^\top R^i u_t + V_{t+1}^i(\info_{t+1}) \big| \info_t \big], \\ \nonumber
    &\hspace{2.5cm} \forall \ t = T+1, \cdots, 0,
\end{align}
\end{subequations}
}
$\hspace{-1.4mm}$satisfy the unilateral optimality condition for each agent, i.e., for each $i \in [N]$, $t \in [0, T]$, and alternative choice of output feedback map $\gamma_t^i: \R^{n_y^i(t+1)} \times \R^{n(T+1)} \ra \R^n$:
{
\setlength{\abovedisplayskip}{3pt}
\setlength{\belowdisplayskip}{3pt}
\begin{align}
\nonumber
    V_t^i(\info_t) &:= W_t^i\big( \info_t, \gamma_t^{i\star}(\info_t), \gamma_t^{-i\star}(\info_t) \big) \\
\nonumber
    &\leq W_t^i\big( \info_t, \gamma_t^i(\info_t), \gamma_t^{-i\star}(\info_t) \big).
\end{align}
}
\end{definition}

\subsection{Extension to Infinite-Horizon Dynamic LQG Games}
\label{subsec: Extension to Infinite-Horizon Dynamic LQG Games}

An infinite-horizon variant $G_\infty := (A, B, C, \boldQ, \boldR)$ of the dynamic LQG game and Nash equilibrium concept introduced in Sec. \ref{subsec: Dynamic LQG Games and Nash Equilibria Definitions} can be derived as follows.
We make the mild assumption, standard in the infinite-horizon optimal control literature \cite{Bertsekas1995DynamicProgrammingAndOptimalControl, Yazdani2023DifferentiallyPrivateLQControl}, that 
$\lim_{t \ra \infty} \bar x_t^i = \bar x^i$, and thus the agents' reference trajectories may be assumed static in the infinite-horizon setting.

We consider infinite-horizon output feedback strategies $\boldgamma := \{ \gamma_t^i: \R^{n_y^i(t+1)} \times \R^n \ra \R^{n_u^i}: i \in [N], t \in \{0 \} \cup \N \}$, where, for each $i \in [N]$ and $t \in \{0\} \cup \N$, each output feedback strategy $\gamma_t^i: (\info_t, \bar x) \mapsto u_t^i$ maps the information available to Agent $i$ at time $t$, i.e., $\info_t := (y_0, \cdots, y_t) \in \R^{n_y(t+1)}$, and the agents' (limiting) reference state $\bar x \in \R^n$, to their control at time $t$, i.e., $u_t^i$.
For convenience, define $x_{0:\infty} := (x_0, x_1, \cdots)$ and $u_{0:\infty} := (u_0, u_1, \cdots)$ to be the countably infinite sequences of states and controls associated with 
$G_\infty$.
Then the infinite-horizon extension of each Agent $i$'s finite-horizon cost \eqref{Eqn: Cost, Agent i (tilde J superscript i)} can be formulated as follows:
{
\setlength{\abovedisplayskip}{3pt}
\setlength{\belowdisplayskip}{3pt}
\begin{align}
\nonumber
    \tilde J^i(x_0, \boldgamma) &:= \lim_{T \ra \infty} \frac{1}{T+1} \E_{z_T}\big[ \textstyle\sum_{t=0}^T (x_t - \tilde x_t)^\top Q^i (x_t - \tilde x_t) \\
\label{Eqn: Cost, Agent i (tilde J superscript i), Infinite-horizon}
    &\hspace{3.2cm} + \textstyle\sum_{j = 1}^N u_t^{j\top} R^{ij} u_t^j \big]. 
\end{align}
}

We then define the infinite-horizon \textit{feedback Nash equilibrium} (FBNE) under the CLIS information pattern as follows.

\begin{definition}
(\textbf{Feedback Nash Equilibrium (FBNE) for Infinite-Horizon LQG Games})
\label{Def: FBNE under CLIS, Infinite-horizon}
We call a given set of output feedback strategies $\boldgamma^\star := \{\gamma_t^{i\star}: \R^{n_y^i(t+1)} \times \R^{n_x} \ra \R^{n_u^i}: i \in [N], t \in \{0\} \cup \N \}$ a feedback Nash Equilibrium with respect to the CLIS information pattern for the infinite-horizon dynamic game $\G_\infty$ if, for any agent $i \in [N]$, and any alternative choice of output feedback strategies $\boldgamma_\steady^i := \{\gamma_t^i: \R^{n_y^i(t+1)} \times \R^{n_x} \ra \R^{n_u^i}: t \in \{0\} \cup \N \}$ for Agent $i$:
{
\setlength{\abovedisplayskip}{3pt}
\setlength{\belowdisplayskip}{3pt}
\begin{align}
    \tilde J^i(x_0, \boldgamma_\steady^{i\star}, \boldgamma_\steady^{-i\star}) &\leq \tilde J^i(x_0, \boldgamma_\steady^i, \boldgamma_\steady^{-i\star}).
\end{align}
}
\end{definition}

\subsection{Background on Differential Privacy}
\label{subsec: Fundamentals of Differential Privacy}


In our model, 
the dynamics noise terms $w_t$ represent exogenous disturbances which arise independently of agents' decisions, while the distributions of the output noise terms $v_t^i$ and reference noise terms $\bar w_t^i$ are designed by agents to preserve the privacy of their state and reference trajectories, respectively. 
Below, we describe guiding principles, grounded in
the theory of 
differential privacy, for agents to select appropriate output and reference noise levels to enforce differential privacy.

Differential privacy provides a mathematical framework for protecting sensitive data, while allowing post-hoc accurate analyses, filtering, and processing that do not weaken its privacy protections.
Our work considers an input perturbation-based form of differential privacy.
In this formulation, agents add privacy-protecting noise to sensitive data before it is shared, thus ensuring that 
it is difficult for the recipient to determine the underlying sensitive data that produced it. 
All background on differential privacy is presented for the input perturbation case, and
we refer the reader to~\cite{NyPappas2014DifferentiallyPrivateFiltering} for a more general background. 

Concretely, for each agent $i \in [N]$, we consider finite-horizon\footnote{The 
discussion in Sec. \ref{subsec: Fundamentals of Differential Privacy} on input perturbation-based differential privacy extends naturally to infinite-horizon settings; see, e.g., \cite[Sec. 2]{Yazdani2023DifferentiallyPrivateLQControl}.} 
state trajectories $x^i := (x_0^i, \cdots, x_{T+1}^i) \in \R^{n_x^i (T+2)}$
and reference trajectories~$\bar{x}^i = (\bar{x}_0^i, \cdots, \bar{x}_{T+1}^i) \in \R^{n_x^i (T+2)}$. 
The goal of differential privacy is to make such trajectories approximately indistinguishable from
other nearby trajectories, and the notion
of ``nearby'' is codified by an adjacency relation.
For simplicity, we state the adjacency relation and the definition of differential privacy in terms of~$x^i$,
though both definitions apply verbatim to~$\bar{x}^i$ as well. 

\begin{definition} \label{def:adj}
\looseness=-1
Fix an adjacency parameter~$b^i > 0$ for agent~$i \in [N]$. Two trajectories~$x^i = \big(x^i_0, \cdots, x^i_T), \hat x^i = (\hat x^i_0, \cdots, \hat x^i_T) \in \R^{n_x^i T}$
are adjacent if~$\|x^i - \hat x^i\|_2 \leq b^i$. 
\end{definition}

The adjacency relation in Def.~\ref{def:adj} requires that a 
trajectory be made approximately indistinguishable from all other trajectories not
more than 
$b^i$ away
in the~$2$-norm.
Approximate indistinguishability is enforced by a privacy mechanism
(a randomized map that implements differential privacy). 

\begin{definition} \label{def:dp}
Let~$\epsilon^i \geq 0$ and~$\delta^i \in (0, \frac{1}{2})$ be given. A mechanism~$M$ 
is~$(\epsilon^i, \delta^i)$-differentially private if, for all adjacent~$x^i, \hat x^i \in \R^{n_x^i T}$:
{
\setlength{\abovedisplayskip}{3pt}
\setlength{\belowdisplayskip}{3pt}
\begin{align}
\nonumber
    \mathbb{P}[M(x^i) \in S] \leq e^{\epsilon^i}\mathbb{P}[M(\hat x^i) \in S] + \delta^i
\end{align}
}
$\hspace{-1mm}$for all sets~$S$ in the Borel~$\sigma$-algebra over~$\R^{n_x^i T}$. 
\end{definition}

Above, $\epsilon^i$ quantifies the leakage of sensitive information and typically ranges from $0.1$ to $10$ \cite{hsu14}, while $\delta^i$ is the probability of leaking more information than $\epsilon^i$-leakage allows, and typically ranges from $0$ to $0.05$ \cite{hsu14}.
Our work enforces differential privacy by applying the Gaussian mechanism to 
the information that the agents send to the cloud in~\eqref{Eqn: LQ Game, output map}-\eqref{Eqn: LQ Game, noisy reference}. 

\begin{proposition} \label{Prop: Calibrating Privacy and Adjacency Parameters}
Suppose Agent~$i$ uses privacy parameters~$\epsilon^i \geq 0$ and~$\delta^i \in (0, \frac{1}{2})$,
with adjacency parameter~$b^i > 0$. 
Then the output map
in
\eqref{Eqn: LQ Game, output map}
is~$(\epsilon^i, \delta^i)$-differentially private if~$v^i_t \sim \mathcal{N}(0, (\sigma^i)^2 I_{n^i_y \times n^i_y})$,
where 
$\sigma^i \geq \frac{\Vert  C^i \Vert_2 b^i}{2\epsilon^i} \big(\mathcal{Q}^{-1}(\delta^i) + \sqrt{(\mathcal{Q}^{-1}(\delta^i))^2 + 2\epsilon^i} \big)$. 
The map~$\bar{x}^i_t \mapsto \tilde{x}^i_t = \bar{x}^i_t + \bar{w}^i_t$ is~$(\epsilon^i, \delta^i)$-differentially private
if~$\bar{w}^i_t \sim \mathcal{N}(0, (\tau^i)^2 I_{n^i_x \times n^i_x})$, where
$\tau^i \geq \frac{b^i}{2\epsilon^i} \big(\mathcal{Q}^{-1}(\delta^i) + \sqrt{(\mathcal{Q}^{-1}(\delta^i))^2 + 2\epsilon^i}\big)$. 
\end{proposition}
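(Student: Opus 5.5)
The plan is to reduce both claims to the standard Gaussian mechanism bound of Le Ny and Pappas \cite{NyPappas2014DifferentiallyPrivateFiltering}. That bound states: if a query $q$ has $\ell_2$-sensitivity $\Delta_2 q := \sup_{\text{adjacent } x^i, \hat x^i} \|q(x^i) - q(\hat x^i)\|_2$, then releasing $q(x^i) + \nu$ with $\nu \sim \mathcal{N}(0, \sigma^2 I)$ is $(\epsilon^i, \delta^i)$-differentially private whenever
$\sigma \geq \frac{\Delta_2 q}{2\epsilon^i}\big(\mathcal{Q}^{-1}(\delta^i) + \sqrt{(\mathcal{Q}^{-1}(\delta^i))^2 + 2\epsilon^i}\big)$.
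Because the paper works with input perturbation and cites \cite{NyPappas2014DifferentiallyPrivateFiltering} for background, I would invoke this theorem directly rather than re-derive it. The remaining work is only to compute the sensitivity of each released map under Definition~\ref{def:adj}.

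For the output map \eqref{Eqn: LQ Game, output map}, I would treat the whole trajectory at once. The query is $q(x^i) = (C^i x_0^i, \dots, C^i x_T^i) = (I \otimes C^i) x^i$, and the noise is i.i.d. across time with covariance $(\sigma^i)^2 I$, so the stacked noise is $\mathcal{N}(0, (\sigma^i)^2 I)$ on the stacked output. For adjacent $x^i, \hat x^i$,
$\|q(x^i) - q(\hat x^i)\|_2^2 = \sum_t \|C^i(x_t^i - \hat x_t^i)\|_2^2 \leq \|C^i\|_2^2 \|x^i - \hat x^i\|_2^2 \leq \|C^i\|_2^2 (b^i)^2$.
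This gives $\Delta_2 q \leq \|C^i\|_2 b^i$, and substituting into the Gaussian mechanism bound yields the stated condition on $\sigma^i$. For the reference map, the query is the identity, so $\Delta_2 q \leq b^i$ directly, which gives the stated condition on $\tau^i$.

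I expect the only delicate point to be verifying that the cited Gaussian mechanism result applies as stated. Specifically, I need to check that the threshold $\frac{\Delta}{2\epsilon}(\mathcal{Q}^{-1}(\delta) + \sqrt{\mathcal{Q}^{-1}(\delta)^2 + 2\epsilon})$ is exactly the one it provides. If a self-contained check is wanted, I would argue as follows.
\begin{enumerate}
\item Under the Gaussian mechanism, the privacy loss random variable between adjacent inputs is distributed as $\mathcal{N}(\frac{\Delta^2}{2\sigma^2}, \frac{\Delta^2}{\sigma^2})$.
\item Requiring $\mathbb{P}[\text{loss} > \epsilon] \leq \delta$ gives the condition $\frac{\epsilon\sigma}{\Delta} - \frac{\Delta}{2\sigma} \geq \mathcal{Q}^{-1}(\delta)$.
\item This is a quadratic inequality in $\sigma/\Delta$. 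Solving it and taking the positive root recovers exactly the stated threshold.
\end{enumerate}
Since $\delta^i < \frac{1}{2}$, we have $\mathcal{Q}^{-1}(\delta^i) > 0$, so the positive root is well defined. Finally, the threshold is monotone in $\Delta$, so using an upper bound on the sensitivity suffices.
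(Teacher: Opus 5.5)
Your proposal is correct and matches what the paper relies on. The paper gives no separate proof of this proposition; it treats it as a direct instance of the Le Ny--Pappas Gaussian mechanism with $\ell_2$-sensitivity $\|C^i\|_2 b^i$ for the output map and $b^i$ for the reference map, and your privacy-loss sketch reproduces the proof of that cited result. Two minor remarks: the threshold implicitly requires $\epsilon^i > 0$ (despite the statement's $\epsilon^i \geq 0$), and $\delta^i < \tfrac{1}{2}$ is not what makes the positive root well defined, since the quadratic $\epsilon^i s^2 - \mathcal{Q}^{-1}(\delta^i)\, s - \tfrac{1}{2}$ has roots multiplying to $-1/(2\epsilon^i) < 0$ and so has exactly one positive root whatever the sign of $\mathcal{Q}^{-1}(\delta^i)$.
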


For the rest of the paper, we suppose that each noise term~$v^i_t$ 
and~$\bar{w}^i_t$ 
satisfies its variance bound in Prop. \ref{Prop: Calibrating Privacy and Adjacency Parameters} for all agents~$i \in [N]$ and all times~$t \in [0, T+1]$.

\section{Differentially Private Nash Equilibrium Controls in LQG Games}
\label{sec: Differentially Private Nash Equilibrium Controls in LQG Games}


\subsection{Finite-Horizon Equilibrium Strategies for LQG Games}
\label{subsec: Finite-Horizon Equilibrium Strategies for LQG Games}


We now present Riccati equations for computing 
the FBNE $\boldgamma^\star$ defined in Def. \ref{Def: FBNE under CLIS} under a set of mild regularity conditions (Assumption \ref{Assump: Phi and Z matrices are invertible}).
To facilitate the characterization of the FBNE, we first present the following notation. 
Given an $N$-agent, horizon-$T$ dynamic game $\G_T := (A, B, C, \boldQ, \boldR)$,
we define $H_t \in \R^{m \times n}$, $\Phi_t \in \R^{m \times m}$, $L_t \in \R^{m \times nN}$, $\tilde h_t \in \R^m$, $\tilde g_t \in \R^m$, $Z_t^i \in \R^{n \times n}$, $\tilde \beta_t^i \in \R^n$, and $\tilde c_t^i \in \R$ for each $i \in [N]$ via backward recursion from $t = T+1$, as follows:
{
\setlength{\abovedisplayskip}{3pt}
\setlength{\belowdisplayskip}{3pt}
\begin{subequations}
\label{Eqn: Riccati equations for FBNE under CLIS}
\begin{align}
\label{Eqn: Cost-to-go matrix for Dynamic LQ Games (Z) initialization, at time T+1}
    Z_{T+1}^i &:= Q^i, \\
\label{Eqn: Cost-to-go vector for Dynamic LQ Games (beta) initialization, at time T+1}
    \tilde \beta_{T+1}^i &:= \tilde x_{T+1}, \\
\label{Eqn: Cost-to-go bias term for Dynamic LQ Games (c) initialization, at time T+1}
    \tilde c_{T+1}^i &= \tr(Q^i \Sigma_{T+1|T+1}), \\
\label{Eqn: H t}
    H_t &:= \vstack(B^{1\top} Z_{t+1}^1, \cdots, B^{N\top} Z_{t+1}^N), \\
\label{Eqn: Phi t}
    \Phi_t &:= \diag\{R^{11}, \cdots, R^{NN} \} + H_t B,  \\
\label{Eqn: L t}
    L_t &:= - \Phi_t^{-1} H_t A,  \\
\label{Eqn: tilde h t}
    \tilde h_t &:= \vstack(B^{1\top} Z_{t+1}^1 \tilde \beta_{t+1}^1, \cdots, B^{N\top} Z_{t+1}^N \tilde \beta_{t+1}^N) \\
\label{Eqn: tilde g t}
    \tilde g_t &:= \Phi_t^{-1} \tilde h_t, \\
\label{Eqn: Z t i}
    Z_t^i &:= Q^i + L_t^\top R^i L_t + (A + B L_t)^\top Z_{t+1}^i (A + B L_t), \\ 
\label{Eqn: beta t i}
    \tilde \beta_t^i &:= (Z_t^i)^{-1} \big( Q^i \tilde x_t - L_t^\top R^i \tilde g_t \\
    \nonumber
    &\hspace{1cm} - (A + B L_t)^\top Z_{t+1}^i (B \tilde g_t - \tilde \beta_{t+1}^i) \big), \\ 
\nonumber
    \tilde c_t^i &:= \tilde c_{t+1}^i + \tr(Q^i \Sigma_{t|t}) + \tr(Z_{t+1}^i (\Sigma_{t+1|t} - \Sigma_{t+1|t+1})) \\
    \nonumber
    &\hspace{1cm} + \tilde x_t^\top Q^i \tilde x_t + \textstyle\sum_{j \in [N]} \tilde g_t^{j\top} R^{ij} \tilde g_t^j - \tilde \beta_t^{i\top} Z_t^i \tilde \beta_t^i  \\
    \label{Eqn: c t i}
    &\hspace{1cm} + (B \tilde g_t - \tilde \beta_{t+1}^i)^\top Z_{t+1}^i (B \tilde g_t - \tilde \beta_{t+1}^i),
\end{align}
\end{subequations}
}

To characterize the \textit{maximum a posteriori} (MAP) state estimates for each agent computed by the cloud across time, we use the following Kalman filter update equations (see, e.g., \cite{Bertsekas1995DynamicProgrammingAndOptimalControl}, for a detailed derivation).
For each $t', t \in [0, T+1]$ such that $t' \geq t$, define:\footnote{For Gaussian variables, the MAP estimate and conditional mean coincide.}
{
\setlength{\abovedisplayskip}{3pt}
\setlength{\belowdisplayskip}{3pt}
\begin{subequations}
\begin{align}
    \hat x_{t'|t}(\info_t) &:= \E[x_{t'}|\info_t], \\
    \Sigma_{t'|t}(\info_t) &:= \cov(x_{t'}|\info_t);
\end{align}
\end{subequations}
}
$\hspace{-1mm}$Next, for each $t \in [0, T]$, we define:
{
\setlength{\abovedisplayskip}{3pt}
\setlength{\belowdisplayskip}{3pt}
\begin{align}
    M_{t+1} &:= \Sigma_{t+1|t} C^\top (C \Sigma_{t+1|t} C^\top + \Sigma_v)^{-1}, \\
    e_{t+1} &:= y_{t+1} - C \hat x_{t+1|t}.
\end{align}
}
$\hspace{-1mm}$Then, across $t \in [0, T]$, the Kalman filter updates are:
{
\setlength{\abovedisplayskip}{3pt}
\setlength{\belowdisplayskip}{3pt}
\begin{subequations}
\label{Eqn: Kalman filter updates}
\begin{align}
\label{Eqn: Kalman filter, mean, dynamics update}
    \hat x_{t+1|t} &= A \hat x_{t|t} 
    + B u_t, \\
\label{Eqn: Kalman filter, covariance, dynamics update}
    \Sigma_{t+1|t} &= A \Sigma_{t|t} A^\top + \Sigma_w, \\
\label{Eqn: Kalman filter, mean, output update}
    \hat x_{t+1|t+1} 
    &= A \hat x_{t|t} + B u_t + M_{t+1} e_{t+1}, \\
\label{Eqn: Kalman filter, covariance, output update}
    \Sigma_{t+1|t+1} &= (\Sigma_{t+1|t}^{-1} + C^\top \Sigma_v^{-1} C)^{-1}.
\end{align}
\end{subequations}
}

Prop. \ref{Prop: FBNE under CLIS (LQ)} below extends \cite[Cor. 6.1]{basar1998DynamicNoncooperativeGameTheory} to establish that, when \eqref{Eqn: Riccati equations for FBNE under CLIS} is well-defined for each $t \in [0, T+1]$, the matrices $L_t$ and vectors $\tilde g_t$ (see \eqref{Eqn: L t} and \eqref{Eqn: tilde g t}) characterize the Nash equilibria $\boldgamma^\star$ for the game $\G_T$ under the CLIS information pattern.

We begin by presenting the following mild assumptions, standard in the control theory and dynamic games literature, which assert that the $N$-agent system \eqref{Eqn: LQ Game, dynamics map}-\eqref{Eqn: LQ Game, output map} is observable (Assumption \ref{Assump: System Observability}) and that the matrices involved in \eqref{Eqn: Riccati equations for FBNE under CLIS} are well-conditioned (Assumption \ref{Assump: Phi and Z matrices are invertible}).

\begin{assumption}
\label{Assump: System Observability}
The pair $(A, C)$ is observable.
\end{assumption}

\begin{assumption}
\label{Assump: Phi and Z matrices are invertible}
For all $i \in [N]$ and $t \leq T+1$, the matrices $\Phi_t$ and $Z_t^i$ (see \eqref{Eqn: Phi t} and \eqref{Eqn: Z t i}) are invertible.
\end{assumption}

\begin{proposition}(\textbf{Feedback Nash Equilibrium for LQ Games under CLIS})
\label{Prop: FBNE under CLIS (LQ)}
Under Assumption \ref{Assump: Phi and Z matrices are invertible}, the FBNE $\boldgamma := \{ \gamma_t^i \}$ for the LQ game $\G_T = (A, B, C, \boldQ, \boldR)$ is given by:
{
\setlength{\abovedisplayskip}{3pt}
\setlength{\belowdisplayskip}{3pt}
\begin{align}
\label{Eqn: FBNE under CLIS (LQ)}
    \gamma_t^i(\info_t) &= L_t^i \hat x_{t|t} + \tilde g_t^i, \qquad \forall \ i \in [N], t \in [0, T],
\end{align}
}
$\hspace{-1mm}$where $L_t^i \in \R^{m_i \times nN}$ and $\tilde g_t^i \in \R^{m_i}$ are respectively given by the $i$-th row blocks (i.e., rows $1 + \sum_{j=1}^{i-1} m_j$ to $\sum_{j=1}^i m_j$, inclusive) of $L_t$ and $\tilde g_t$.
\end{proposition}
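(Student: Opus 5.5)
We proceed by backward induction on $t$, following the dynamic-programming structure of Definition~\ref{Def: FBNE under CLIS} and extending the argument of \cite[Cor. 6.1]{basar1998DynamicNoncooperativeGameTheory} to the imperfect-state setting through the Kalman filter \eqref{Eqn: Kalman filter updates}. The inductive hypothesis is that, for each $i \in [N]$, the minimum cost-to-go under the candidate strategies \eqref{Eqn: FBNE under CLIS (LQ)} is a quadratic function of the conditional mean:
\begin{align*}
V_t^i(\info_t) = (\hat x_{t|t} - \tilde \beta_t^i)^\top Z_t^i (\hat x_{t|t} - \tilde \beta_t^i) + \tilde c_t^i,
\end{align*}
with $Z_t^i$, $\tilde \beta_t^i$, $\tilde c_t^i$ given by \eqref{Eqn: Riccati equations for FBNE under CLIS}. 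Here $\tilde c_t^i$ absorbs all information-independent terms, in particular the traces of conditional covariances. These covariances are deterministic, since by \eqref{Eqn: Kalman filter, covariance, dynamics update}--\eqref{Eqn: Kalman filter, covariance, output update} the $\Sigma_{t'|t}$ do not depend on the realized data or on the controls.

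\emph{Base case.} For $t = T+1$, write $x_{T+1} = \hat x_{T+1|T+1} + (x_{T+1} - \hat x_{T+1|T+1})$. The estimation error has zero conditional mean and covariance $\Sigma_{T+1|T+1}$, and $\tilde x_{T+1}$ belongs to $\info_{T+1}$. Taking the conditional expectation in \eqref{Eqn: Cost-to-go (W) initialization, at time T+1} therefore gives exactly the form above with $Z_{T+1}^i = Q^i$, $\tilde \beta_{T+1}^i = \tilde x_{T+1}$, and $\tilde c_{T+1}^i = \tr(Q^i \Sigma_{T+1|T+1})$.

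\emph{Inductive step.} Assume the form holds at $t+1$. By \eqref{Eqn: Kalman filter, mean, output update}, $\hat x_{t+1|t+1} = A\hat x_{t|t} + Bu_t + M_{t+1}e_{t+1}$, where the innovation $e_{t+1}$ has zero mean given $\info_t$ and is independent of $u_t$, since $u_t$ is $\info_t$-measurable. The covariance of $M_{t+1}e_{t+1}$ equals $\Sigma_{t+1|t} - \Sigma_{t+1|t+1}$. Substituting into \eqref{Eqn: Cost-to-go (W) recursive DP update} yields
\begin{align*}
W_t^i(\info_t,u_t) ={}& (\hat x_{t|t}-\tilde x_t)^\top Q^i(\hat x_{t|t}-\tilde x_t) + u_t^\top R^i u_t \\
&+ (A\hat x_{t|t}+Bu_t-\tilde\beta_{t+1}^i)^\top Z_{t+1}^i(A\hat x_{t|t}+Bu_t-\tilde\beta_{t+1}^i) + \text{const},
\end{align*}
where the constant collects $\tr(Q^i\Sigma_{t|t})$, $\tr(Z_{t+1}^i(\Sigma_{t+1|t}-\Sigma_{t+1|t+1}))$, and $\tilde c_{t+1}^i$. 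Because the estimate dynamics are linear in $u_t$ and the covariance terms do not depend on $u_t$, a separation principle holds: each agent's problem reduces to a perfect-state game on $\hat x_{t|t}$.

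We then impose unilateral optimality. Fix $u_t^{-i}$; the map $u_t^i \mapsto W_t^i$ is quadratic. We need its Hessian $R^{ii} + B^{i\top} Z_{t+1}^i B^i$ to be positive definite; this is the point to flag. The first-order conditions, stacked over $i$, read
\begin{align*}
(\diag\{R^{jj}\} + H_t B)\,u_t = -H_t A\,\hat x_{t|t} + \tilde h_t,
\end{align*}
that is, $\Phi_t u_t = -H_tA\hat x_{t|t} + \tilde h_t$. Invertibility of $\Phi_t$ (Assumption~\ref{Assump: Phi and Z matrices are invertible}) makes the solution unique, giving $u_t = L_t\hat x_{t|t} + \tilde g_t$ as claimed.

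Substituting this control back in, $W_t^i$ becomes quadratic in $\hat x_{t|t}$. Matching the quadratic coefficient gives \eqref{Eqn: Z t i}. Completing the square, using invertibility of $Z_t^i$, gives $\tilde \beta_t^i$ as in \eqref{Eqn: beta t i}. The leftover constant is exactly \eqref{Eqn: c t i}. This closes the induction.

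\emph{Main obstacle.} The delicate step is justifying that the stationarity conditions actually give minimizers rather than mere critical points, since $Q^i$ and the off-diagonal $R^{ij}$ are indefinite. We would argue that convexity in agent $i$'s own control follows from $R^{ii}\succ 0$ provided $B^{i\top}Z_{t+1}^iB^i \succeq 0$. If this cannot be derived, we would state it as implicitly required, in the same way that \cite[Cor. 6.1]{basar1998DynamicNoncooperativeGameTheory} requires it. A second subtlety is that the cost-to-go must be shown to depend on $\info_t$ only through $\hat x_{t|t}$ and $\tilde x$, so that deviations using richer functions of $\info_t$ cannot help. This follows because the conditional expectation of the quadratic depends on $\info_t$ only through $\hat x_{t|t}$ and the deterministic covariances.
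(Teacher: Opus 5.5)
Your proposal follows the paper's proof essentially step for step: the same inductive hypothesis $V_t^i(\info_t) = (\hat x_{t|t} - \tilde\beta_t^i)^\top Z_t^i (\hat x_{t|t} - \tilde\beta_t^i) + \tilde c_t^i$, the same innovation form of $\hat x_{t+1|t+1}$ producing $\tr\big(Z_{t+1}^i(\Sigma_{t+1|t} - \Sigma_{t+1|t+1})\big)$, the same stacked first-order conditions $\Phi_t u_t = -H_t A \hat x_{t|t} + \tilde h_t$, and the same coefficient matching to recover $Z_t^i$, $\tilde\beta_t^i$, $\tilde c_t^i$. The second-order issue you flag is genuine, and the paper does not address it; it simply sets the gradient to zero. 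With $Q^i$ indefinite, $R^{ii} + B^{i\top} Z_{t+1}^i B^i \succ 0$ does not follow from Assumption~\ref{Assump: Phi and Z matrices are invertible}, so stating it as an extra hypothesis is the right call (it does hold, e.g., whenever $\sigma_{\min}(R^{ii}) > \Vert B^i\Vert_2^2 \Vert Z_{t+1}^i\Vert_2$, as under the hypotheses of Thm.~\ref{Thm: Negative Cost of Privacy}).
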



\begin{proof}
See App. \ref{subsec: App, Proof of Prop: FBNE under CLIS (LQ)}.
\end{proof}

\subsection{Infinite-Horizon Equilibrium Strategies for LQG Games}
\label{subsec: Infinite-Horizon Equilibrium Strategies for LQG Games}

To quantify the steady-state cost of privacy for LQ games, we first derive the infinite-horizon feedback Nash equilibria for LQ games under the CLIS information pattern, generalizing Prop. \ref{Prop: FBNE under CLIS (LQ)} to the steady-state infinite-horizon setting. 
We shall require the infinite-horizon analog of the equations \eqref{Eqn: Riccati equations for FBNE under CLIS}, as given below in \eqref{Eqn: Riccati equations for FBNE under CLIS, Infinite-Horizon}, which establish equality constraints that characterize the matrices $H \in \R^{m \times Nn}$, $Z, \Pi \in \R^{Nn \times Nn}$, $\Phi \in \R^{m \times m}$, $L \in \R^{m \times nN}$ and vectors $\tilde h \in \R^m$, $\tilde g \in \R^m$, $\tilde \beta \in \R^n$ (provided all inverse matrices appearing below in \eqref{Eqn: Riccati equations for FBNE under CLIS, Infinite-Horizon} exist).
{
\setlength{\abovedisplayskip}{3pt}
\setlength{\belowdisplayskip}{3pt}
\begin{subequations}
\label{Eqn: Riccati equations for FBNE under CLIS, Infinite-Horizon}
\begin{align}
\label{Eqn: H, Infinite-Horizon}
    H &= \vstack(B^{1\top} Z^1, \cdots, B^{N\top} Z^N), \\
\label{Eqn: Phi, Infinite Horizon}
    \Phi &= \diag\{R^{11}, \cdots, R^{NN} \} + H B, \\
\label{Eqn: Pi, Infinite Horizon}
    \Pi &= \Phi^{-1}, \\
\label{Eqn: L, Infinite-Horizon}
    L &= - \Phi^{-1} H A = - \Pi HA, \\
\label{Eqn: tilde h, Infinite-horizon}
    \tilde h &= \vstack(B^{1\top} Z^1 \tilde \beta^1, \cdots, B^{N\top} Z^N \tilde \beta^N) \\
\label{Eqn: tilde g, Infinite-horizon}
    \tilde g &= \Phi^{-1} \tilde h, \\
\label{Eqn: Zi, Infinite-horizon}
    Z^i &= Q^i + L^\top R^i L + (A + B L)^\top Z^i (A + B L), \\
\label{Eqn: beta i, Infinite-horizon}
    \tilde \beta^i &= (Z^i)^{-1} \big( Q^i \tilde x - L^\top R^i \tilde g - (A + B L)^\top Z^i (B \tilde g - \tilde \beta^i) \big). 
\end{align}
\end{subequations}
}





Next, we formally assume the  existence of matrices $H$, $Z$, $\Phi$, $L$ and vectors $\tilde h$, $\tilde g$, $\tilde \beta$, satisfying \eqref{Eqn: Riccati equations for FBNE under CLIS, Infinite-Horizon}.
This is again a mild assumption, since computing infinite-horizon FBNE for linear quadratic games by taking the limit $t \rightarrow \infty$ of \eqref{Eqn: Riccati equations for FBNE under CLIS} is standard in the dynamic games literature, and yields \eqref{Eqn: Riccati equations for FBNE under CLIS, Infinite-Horizon}.

\begin{assumption}
\label{Assump: Phi and Z matrices are invertible, Infinite-horizon}
There exist matrices $H$, $Z^i$, $\Phi$, $L$ and vectors $\tilde h$, $\tilde g$, $\tilde \beta^i$ across all $i \in [N]$, with $\Phi$ and $Z^i$ invertible for all $i \in [N]$, that satisfy \eqref{Eqn: Riccati equations for FBNE under CLIS, Infinite-Horizon} and are respective limit points for $H_t$, $\Phi_t$, $L_t$, $\tilde h_t$, $\tilde g_t$, $Z_t^i$, and $\tilde \beta_t^i$, defined recursively 
via \eqref{Eqn: Riccati equations for FBNE under CLIS}, as $t \ra - \infty$. Moreover, $I - (A + BL)^\top$ is invertible, as is:
{
\setlength{\abovedisplayskip}{3pt}
\setlength{\belowdisplayskip}{3pt}
\footnotesize
\begin{align}
\label{Eqn: Invertibility of I + hat Pi hat Z inv hat L, Condition}
    I + \Phi^{-1} 
    \begin{bmatrix}
        B^1 \big(I - (A + BL)^\top \big)^{-1} \big[ L^\top R^1 + (A + BL)^\top Z^1 B \big] \\
        \vdots \\
        B^N \big(I - (A + BL)^\top \big)^{-1} \big[ L^\top R^N + (A + BL)^\top Z^N B \big]
    \end{bmatrix}.
\end{align}
}
\end{assumption}

We also note that, since the system \eqref{Eqn: LQ Game, dynamics map}-\eqref{Eqn: LQ Game, output map} is assumed to be observable (Assumption \ref{Assump: System Observability}), the covariance matrix updates given by the Kalman filter equations \eqref{Eqn: Kalman filter, covariance, dynamics update} and \eqref{Eqn: Kalman filter, covariance, output update} converge to a time-invariant Kalman filter as $t \ra \infty$ \cite{Bertsekas1995DynamicProgrammingAndOptimalControl, Yazdani2023DifferentiallyPrivateLQControl}. Concretely, 
at steady state,
we have $\Sigma_{t+1|t} \ra \Sigma$ and $\Sigma_{t|t} \ra \bar \Sigma$, where $\Sigma, \bar \Sigma \in \R^{n \times n}$ are given by:
{
\setlength{\abovedisplayskip}{3pt}
\setlength{\belowdisplayskip}{3pt}
\begin{subequations}
\label{Eqn: Kalman filter equations, Infinite-horizon}
\begin{align}
\label{Eqn: Kalman filter equations, Infinite-horizon, dynamics update (Sigma)}
    \Sigma &= 
    A \bar \Sigma A^\top + \Sigma_w, \\
\label{Eqn: Kalman filter equations, Infinite-horizon, output update (bar Sigma)}
    \bar \Sigma &= (\Sigma^{-1} + C^\top \Sigma_v^{-1} C)^{-1}.
\end{align}
\end{subequations}
}

Prop. \ref{Prop: FBNE under CLIS (LQ)} and \cite[Sec. 6.2.3]{basar1998DynamicNoncooperativeGameTheory} imply that, under Assumptions \ref{Assump: System Observability}, \ref{Assump: Phi and Z matrices are invertible}, and \ref{Assump: Phi and Z matrices are invertible, Infinite-horizon}, the infinite-horizon FBNE strategy across agents is given by the maps $\boldgamma_\steady = \{\gamma_\steady^i: x \mapsto L^i x + \tilde g^i, \forall i \in [N] \}$.


\section{Cost of Privacy}
\label{sec: Cost of Privacy}


\subsection{Cost of Privacy in a dynamic LQ Game}
\label{subsec: Cost of Privacy in a dynamic LQ Game}

We can now
characterize the steady-state cost of privacy for each agent, as follows. First, we require the following preliminary notation. Under Assumption \ref{Assump: Phi and Z matrices are invertible, Infinite-horizon}, we define:
{
\setlength{\abovedisplayskip}{3pt}
\setlength{\belowdisplayskip}{3pt}
\small
\begin{subequations}
\label{Eqn: Notation for Cost of Privacy}
\begin{align}
\label{Eqn: hat L}
    \hat L &:= \begin{bmatrix}
        L^\top R^1 + (A + BL)^\top Z^1 B \\
        \vdots \\
        L^\top R^N + (A + BL)^\top Z^N B
    \end{bmatrix}, \\
\label{Eqn: hat Z}
    \hat Z &:= \diag\big\{ (I - (A + BL)^\top) Z^1, \cdots, (I - (A + BL)^\top) Z^N \big\}, \\
\label{Eqn: hat Q}
    \hat Q &:= \vstack\big( Q^1, \cdots, Q^N \big), \\
\label{Eqn: hat Pi}
    \hat \Pi &:= \begin{bmatrix}
        \Pi^{11} B^{1\top} Z^1 & \cdots & \Pi^{1N} B^{N\top} Z^N \\
        \vdots & \ddots & \vdots \\
        \Pi^{N1} B^{1\top} Z^1 & \cdots & \Pi^{NN} B^{N\top} Z^N 
    \end{bmatrix} \\
\label{Eqn: F}
    F &:= (I + \hat \Pi \hat Z^{-1} \hat L)^{-1} \hat \Pi \hat Z^{-1} \hat Q,
\end{align}
\end{subequations}
}
$\hspace{-1.5mm}$where $\Pi^{ij} \in \R^{m_i \times m_j}$ denotes the $(i, j)$-th block of $\Pi$, for each $i, j \in [N]$.
Note that the invertibility of $I + \hat \Pi \hat Z^{-1} \hat L$ can be derived from the invertibility of \eqref{Eqn: Invertibility of I + hat Pi hat Z inv hat L, Condition} in Assumption \ref{Assump: Phi and Z matrices are invertible, Infinite-horizon} via basic matrix operations, and thus $F$ is well-defined in \eqref{Eqn: F}.

\begin{theorem}
\label{Thm: Cost of Privacy}
Suppose Assumptions \ref{Assump: System Observability}, \ref{Assump: Phi and Z matrices are invertible}, and \ref{Assump: Phi and Z matrices are invertible, Infinite-horizon} hold.
For each $i \in [N]$, let $J^i(x_0, \boldgamma_\steady)$ denote the cost $\tilde J^i(x_0, \boldgamma_\steady)$ of Agent $i$ when no agent privatizes outputs or references, i.e., when $\Sigma_{\bar w} = O$ and $\Sigma_v = O$. 
Let $\Delta J^i(x_0, \boldgamma_\steady) := \tilde J^i(x_0, \boldgamma_\steady) - J^i(x_0, \boldgamma_\steady)$ denote the cost of privacy of Agent $i$.
Then:
{
\setlength{\abovedisplayskip}{3pt}
\setlength{\belowdisplayskip}{3pt}
\begin{align}
\nonumber
    \Delta J^i(x_0, \boldgamma_\steady) &= \tr\big(Z^i \Sigma + (Q^i - Z^i) \bar \Sigma \big) - \tr(Z^i \Sigma_w) \\
\label{Eqn: Cost of Privacy}
    &\hspace{1cm} + \tr \big( (Q^i + F^\top R^i F) \Sigma_{\bar w} \big).
\end{align}
}
\end{theorem}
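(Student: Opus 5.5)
We plan to split the steady-state cost $\tilde J^i(x_0,\boldgamma_\steady)$ into a part driven by state-estimation error and a part driven by the reference noise. We then subtract the same decomposition at $\Sigma_v = O$, $\Sigma_{\bar w}=O$.

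\emph{Step 1: cost as a limit of value functions.} Starting from Prop.~\ref{Prop: FBNE under CLIS (LQ)}, we write the minimum cost-to-go as $V_t^i(\info_t) = (\hat x_{t|t} - \tilde\beta_t^i)^\top Z_t^i(\hat x_{t|t}-\tilde\beta_t^i) + \tilde c_t^i$. This is the quadratic form produced by the backward induction in the proof of Prop.~\ref{Prop: FBNE under CLIS (LQ)}. Dividing $V_0^i$ by $T+1$ and letting $T\to\infty$, Assumption~\ref{Assump: Phi and Z matrices are invertible, Infinite-horizon} lets us replace all Riccati quantities by their limits. The Kalman covariances converge to $\Sigma,\bar\Sigma$ by Assumption~\ref{Assump: System Observability}. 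The quadratic term in $\hat x_{0|0}$ vanishes in the time average, so $\tilde J^i$ equals the per-stage increment of $\tilde c_t^i$ in \eqref{Eqn: c t i}, evaluated at steady state.

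\emph{Step 2: the estimation terms.} The stochastic part of that increment is $\tr(Q^i\bar\Sigma) + \tr(Z^i(\Sigma-\bar\Sigma))$. This equals $\tr(Z^i\Sigma + (Q^i-Z^i)\bar\Sigma)$. Without output noise, $\Sigma_v=O$ together with observability forces $\bar\Sigma = O$ and hence $\Sigma=\Sigma_w$, leaving $\tr(Z^i\Sigma_w)$. One caveat is that if $C$ is not full column rank, $\bar\Sigma$ need not vanish; in that case we would read the non-private baseline as perfect state information. The difference of the two cases gives the first line of \eqref{Eqn: Cost of Privacy}. The Riccati matrices $Z^i, L$ do not depend on the noise, which is what makes this subtraction clean.

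\emph{Step 3: the reference-noise term (main obstacle).} The remaining deterministic terms of $\tilde c^i$ depend on $\tilde x = \bar x + \bar w$ through $\tilde g$ and $\tilde\beta^i$. We will solve the stationary linear equations \eqref{Eqn: tilde h, Infinite-horizon}–\eqref{Eqn: beta i, Infinite-horizon} explicitly.
\begin{itemize}
\item Rearranging \eqref{Eqn: beta i, Infinite-horizon} gives $(I-(A+BL)^\top)Z^i\tilde\beta^i = Q^i\tilde x - [L^\top R^i + (A+BL)^\top Z^i B]\tilde g$.
\item Stacking over $i$, this reads $\hat Z\tilde\beta = \hat Q\tilde x - \hat L\tilde g$.
\item Substituting into $\tilde g = \Pi\tilde h$, with $\tilde h$ built blockwise from $B^{j\top}Z^j\tilde\beta^j$, gives $\tilde g = \hat\Pi\hat Z^{-1}(\hat Q\tilde x - \hat L\tilde g)$.
\item Hence $\tilde g = F\tilde x$ with $F$ as in \eqref{Eqn: F}.
\end{itemize}
The expected per-stage cost is then a quadratic in $\tilde x$ whose expectation over $\bar w$ splits into the $\bar x$ part plus a trace against $\Sigma_{\bar w}$. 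The $\bar x$ part appears identically in $J^i$ and cancels.

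The delicate point is identifying which terms survive with coefficient $Q^i + F^\top R^i F$. Here we will argue directly from the closed loop rather than from \eqref{Eqn: c t i}, since $\tilde x$ is fixed over time but random. At steady state the state mean tracks a point determined by $\tilde x$, and the penalty is on $x - \tilde x$. The control offset $\tilde g = F\tilde x$ contributes $\tilde x^\top F^\top R^i F\tilde x$, and the tracking error contributes $\tilde x^\top Q^i\tilde x$. The terms in $\tilde\beta^{i\top}Z^i\tilde\beta^i$ and $(B\tilde g-\tilde\beta^i)^\top Z^i(\cdot)$ should cancel in the stationary average, by the fixed-point identity of Step~3.

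We expect verifying this cancellation to be the hardest algebra. Our first attempt will use the identity $Z^i\tilde\beta^i = (A+BL)^\top Z^i(\tilde\beta^i - B\tilde g) + Q^i\tilde x - L^\top R^i\tilde g$ to telescope these terms. Taking the expectation $\E[\bar w^\top M\bar w] = \tr(M\Sigma_{\bar w})$ then yields the final trace term in \eqref{Eqn: Cost of Privacy}.
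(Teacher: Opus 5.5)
Your Steps 1--2 follow the paper's argument, and so does your derivation of $\tilde g = F\tilde x$, which stacks \eqref{Eqn: beta i, Infinite-horizon} into $\hat L\tilde g + \hat Z\tilde\beta = \hat Q\tilde x$ and uses $\tilde g = \hat\Pi\tilde\beta$.

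The gap is the last step of Step 3. The terms $-\tilde\beta^{i\top}Z^i\tilde\beta^i + (B\tilde g-\tilde\beta^i)^\top Z^i(B\tilde g-\tilde\beta^i)$ in the steady-state increment of $\tilde c^i$ do not cancel, and the fixed-point identity cannot make them cancel. To see why, write $\ell^i(x) := (x-\tilde x)^\top Q^i(x-\tilde x) + (Lx+\tilde g)^\top R^i(Lx+\tilde g)$ and $V^i(x) := (x-\tilde\beta^i)^\top Z^i(x-\tilde\beta^i)$. Then \eqref{Eqn: Zi, Infinite-horizon} and \eqref{Eqn: beta i, Infinite-horizon} are exactly the quadratic and linear parts of the identity $V^i(x) + d^i = \ell^i(x) + V^i\big((A+BL)x + B\tilde g\big)$, and its constant part defines the deterministic increment $d^i$. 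Evaluate this at the closed-loop fixed point $x^\star = (I-A-BL)^{-1}B\tilde g$, which exists by Assumption~\ref{Assump: Phi and Z matrices are invertible, Infinite-horizon}. This gives $d^i = \ell^i(x^\star) = \tilde x^\top(G^\top Q^iG + K^\top R^iK)\tilde x$, where $G := (I-A-BL)^{-1}BF - I$ and $K := F + L(I-A-BL)^{-1}BF$.

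Your own remark that the state mean tracks a point determined by $\tilde x$ leads to the same conclusion. The tracking error is then $x^\star - \tilde x$, not $-\tilde x$, and the steady-state control is $Lx^\star + F\tilde x$, not $F\tilde x$.

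A concrete check: take $N=1$, $A=0$, $B=C=Q^1=R^1=1$. Then $L=0$, $Z^1=1$, $\tilde\beta^1=\tilde x$, and $F=\tfrac12$. The $\tilde\beta$-terms equal $-\tfrac34\tilde x^2$, so $d^1=\tfrac12\tilde x^2$. A direct computation with $x_{t+1}=\tfrac12\tilde x + w_t$ and $u_t=\tfrac12\tilde x$ confirms this. By contrast, $Q^1+F^\top R^1F=\tfrac54$.

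The paper never routes the reference term through $\tilde c^i$. It substitutes $\tilde x = \bar x + \bar w$ and $\tilde u_t = u_t + F\bar w$ directly into the stage cost, keeping $x_t$ and $u_t$ equal to the non-private closed-loop quantities and independent of $\bar w$. All cross terms then vanish and $\tr\big((Q^i+F^\top R^iF)\Sigma_{\bar w}\big)$ remains. That substitution is exactly where the state shift $(I-A-BL)^{-1}BF\bar w$, and the feedback correction it induces in the control, are discarded.

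So your planned telescoping cannot close Step 3. You can reach the stated trace term only by adopting the paper's substitution. Your exact computation shows that this term matches the true steady-state reference cost $\tr\big((G^\top Q^iG+K^\top R^iK)\Sigma_{\bar w}\big)$ only when that shift is negligible, for example when $\Vert B\Vert_2$ is small as in Sec.~\ref{sec: Numerical Experiments}. It does not match in general: the example above satisfies Assumptions~\ref{Assump: System Observability}--\ref{Assump: Phi and Z matrices are invertible, Infinite-horizon}.
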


\begin{proof}
See App. \ref{subsec: App, Proof of Thm. Cost of Privacy}.
\end{proof}

From Thm. \ref{Thm: Cost of Privacy}, we can readily establish that when each agent's objective function $J^i$ is convex, each agent’s cost of privacy is non-negative.

\begin{corollary}
\label{Cor: Positive Cost of Privacy}
Suppose Assumptions \ref{Assump: System Observability}, \ref{Assump: Phi and Z matrices are invertible}, and \ref{Assump: Phi and Z matrices are invertible, Infinite-horizon} hold, and that agents may privatize references but not outputs, i.e., $\Sigma_v^j = 0, \ \ \forall \ j \in [N]$.
If $Q^i \in \symPSD^n$ and $R^i \in \symPSD^m$ for each agent $i \in [N]$, then $\Delta J^i(x_0, \boldgamma_\steady) \geq 0$ for each $i \in [N]$.
\end{corollary}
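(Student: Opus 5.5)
The plan is to specialize the closed-form expression \eqref{Eqn: Cost of Privacy} from Thm.~\ref{Thm: Cost of Privacy} to the case $\Sigma_v = O$, and then argue term by term. The right-hand side of \eqref{Eqn: Cost of Privacy} has two parts:
\begin{itemize}
\item the estimation-error part $\tr\big(Z^i \Sigma + (Q^i - Z^i)\bar\Sigma\big) - \tr(Z^i \Sigma_w)$, which depends on the output noise only through the steady-state Kalman covariances $\Sigma, \bar\Sigma$ in \eqref{Eqn: Kalman filter equations, Infinite-horizon};
\item the reference-noise part $\tr\big((Q^i + F^\top R^i F)\Sigma_{\bar w}\big)$.
\end{itemize}
I will show that the first part vanishes and the second is nonnegative.

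\textbf{Step 1 (the estimation-error part vanishes).} With $\Sigma_v = O$ the outputs are unperturbed, so the cloud's state estimation is identical to the non-private baseline. Concretely, in \eqref{Eqn: Kalman filter equations, Infinite-horizon, output update (bar Sigma)} the term $C^\top \Sigma_v^{-1} C$ blows up along the observed directions. In the case where the outputs determine the state, this forces $\bar\Sigma = O$, and then \eqref{Eqn: Kalman filter equations, Infinite-horizon, dynamics update (Sigma)} gives $\Sigma = \Sigma_w$. Substituting, the first part becomes
\begin{align}
\nonumber
\tr(Z^i \Sigma_w) + \tr\big((Q^i - Z^i) O\big) - \tr(Z^i \Sigma_w) = 0.
\end{align}

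\textbf{Step 2 (the reference-noise part is nonnegative).} Since $Q^i \in \symPSD^n$ and $R^i \in \symPSD^m$, the matrix $Q^i + F^\top R^i F$ is symmetric positive semidefinite, because $v^\top F^\top R^i F v = (Fv)^\top R^i (Fv) \geq 0$. Also $\Sigma_{\bar w} \in \symPSD$ (it is block diagonal with covariance blocks). For any $M, S \in \symPSD$,
\begin{align}
\nonumber
\tr(MS) = \tr\big(S^{1/2} M S^{1/2}\big) \geq 0.
\end{align}
Hence $\Delta J^i(x_0, \boldgamma_\steady) = \tr\big((Q^i + F^\top R^i F)\Sigma_{\bar w}\big) \geq 0$ for every $i \in [N]$.

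\textbf{Main obstacle.} The delicate step is Step 1: making rigorous that $\Sigma_v = O$ makes the estimation-error terms cancel, since $\Sigma_v^{-1}$ is not literally defined there.
\begin{itemize}
\item I would first try to read Thm.~\ref{Thm: Cost of Privacy} as a comparison against the baseline with $\Sigma_v = O$. Then those terms are, by construction, the difference between the private and non-private estimation contributions, and they coincide whenever the output noise is identical in both games.
\item Failing that, I would take the limit $\Sigma_v \to O$ in \eqref{Eqn: Kalman filter equations, Infinite-horizon}. This uses the information form of the update and Assumption~\ref{Assump: System Observability} to identify $\bar\Sigma$ with the baseline value, which is $O$ when $C$ has full column rank.
\end{itemize}
Everything after Step 1 is a routine positive-semidefiniteness argument.
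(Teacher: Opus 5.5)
Your proposal is correct and follows the paper's argument. The paper's proof is exactly your Step 2 (positive semidefiniteness of $Q^i + F^\top R^i F$ plus nonnegativity of the trace of a product of positive semidefinite matrices), and it silently drops the estimation-error bracket that you handle in Step 1, where your first suggested justification is the right one within the paper's framework: in the proof of Thm.~\ref{Thm: Cost of Privacy} that bracket is $\hat J^i - J^i$, which vanishes when both games use $\Sigma_v = O$, consistent with the paper's convention that zero output noise gives $\bar\Sigma = O$ and $\Sigma = \Sigma_w$.
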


\begin{proof}
See App. \ref{subsec: App, Proof of Cor: Positive Cost of Privacy}.
\end{proof}


\subsection{When does the Cost of Privacy Become Negative?}
\label{subsec: When does the Cost of Privacy Become Negative?}

To complement our results in Cor. \ref{Cor: Positive Cost of Privacy}, we characterize a class of dynamic LQ games with strictly negative costs of privacy. First, for notational ease, we define
$\sigma_A := \Vert A \Vert_2$,
$\hat \sigma_B := \max_{i \in [N]} \Vert B^i \Vert_2$,
$\hat \sigma_Q := \max_{i \in [N]} \Vert Q^i \Vert_2$,
$\hat \sigma_R := \max_{i \in [N]} \Vert R^i \Vert_2$, and $\hat \mu_R := \min_{i \in [N]} \sigma_{\min}(R^{ii})$,
where $\sigma_{\min}(\cdot)$ returns the minimum singular value of a matrix.
Moreover, for each $i, j \in [N]$, we will denote by $Q^{i,j} \in \R^{n_j \times n_j}$ the $(j, j)$-th block of the matrix $Q^i \in \R^{n \times n}$ associated with the state $x_j$ of Agent $j$.

Before proceeding, we introduce the following lemma.

\begin{lemma}
\label{Lemma: f B properties}
Suppose $\sigma_A < 1$. Define:
{
\setlength{\abovedisplayskip}{3pt}
\setlength{\belowdisplayskip}{3pt}
\begin{align}
\label{Eqn: S rho}
    S_\rho &:= \Big\{ \rho \in \Big[ 1, \frac{1}{\sigma_A^2} \Big]: (2 \hat \mu_R \rho + \hat \sigma_R)^2 \geq 4\rho (\rho - 1) \Big\}.
\end{align}
}
$\hspace{-1mm}$and define $f_B: S_\rho \ra \R$ as follows, for each $\rho \in S_\rho$:
{
\setlength{\abovedisplayskip}{3pt}
\setlength{\belowdisplayskip}{3pt}
\begin{align}
\label{Eqn: f B}
    f_B(\rho) &:= \Bigg( \frac{1 - \sigma_A^2 \rho}{N \hat \sigma_Q} \Bigg( \frac{2\rho \hat \mu_R + \hat \sigma_R}{2\rho} \\
    \nonumber
    &\hspace{1cm} - \frac{\sqrt{(2 \rho \hat \mu_R + \hat \sigma_R)^2 - 4\rho (\rho - 1)}}{2\rho} \Bigg) \Bigg)^{1/2}.
\end{align}
}
$\hspace{-1mm}$Then $S_\rho$ is non-empty and compact, $\max_{\rho \in S_\rho} f_B(\rho) > 0$, and
$\arg\max_{\rho \in S_\rho} f_B(\rho) \subset (1, 1/\sigma_A^2)$.
\end{lemma}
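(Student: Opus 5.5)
The plan is to study the quadratic $g(\rho) := (2\hat\mu_R\rho + \hat\sigma_R)^2 - 4\rho(\rho-1)$, which is the discriminant appearing in \eqref{Eqn: S rho} and \eqref{Eqn: f B}. I then show that $f_B$ is continuous and nonnegative on $S_\rho$, vanishes at both endpoints $\rho = 1$ and $\rho = 1/\sigma_A^2$, and is strictly positive just to the right of $\rho = 1$. The three claims then follow from compactness.

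\emph{Step 1: nonemptiness and compactness.} Throughout I assume $\sigma_A \in (0,1)$ and $\hat\sigma_Q > 0$, so that the interval $[1, 1/\sigma_A^2]$ is bounded and $f_B$ is well defined. At $\rho = 1$ we have $g(1) = (2\hat\mu_R + \hat\sigma_R)^2 > 0$, because $\hat\mu_R > 0$ (each $R^{ii}$ is positive definite). Hence $1 \in S_\rho$ and $S_\rho \neq \emptyset$. Since $g$ is continuous, $\{g \ge 0\}$ is closed. Therefore $S_\rho = [1, 1/\sigma_A^2] \cap \{g \geq 0\}$ is closed and bounded, hence compact.

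\emph{Step 2: $f_B$ is well defined, continuous and nonnegative.} For $\rho \in S_\rho$, the inner bracket $h(\rho) := \big(2\rho\hat\mu_R + \hat\sigma_R - \sqrt{g(\rho)}\big)/(2\rho)$ is the smaller root $s$ of
\begin{align}
\nonumber
\rho s^2 - (2\rho\hat\mu_R + \hat\sigma_R)s + (\rho - 1) = 0.
\end{align}
Its roots have sum $(2\rho\hat\mu_R+\hat\sigma_R)/\rho > 0$ and product $(\rho-1)/\rho \geq 0$. Both roots are real because $g \ge 0$ on $S_\rho$, so both are nonnegative. Thus $h(\rho) \geq 0$, with $h(\rho) > 0$ whenever $\rho > 1$. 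The prefactor $(1 - \sigma_A^2\rho)/(N\hat\sigma_Q)$ is nonnegative on $[1, 1/\sigma_A^2]$. The quantity under the outer square root is therefore nonnegative, and $f_B$ is a continuous, nonnegative function on $S_\rho$. By Weierstrass, $\max_{S_\rho} f_B$ exists.

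\emph{Step 3: positivity and location of maximizers.} Continuity of $g$ together with $g(1) > 0$ gives $\eta > 0$ such that $g > 0$ on $[1, 1+\eta]$. Shrinking $\eta$ if needed, we may also take $1 + \eta < 1/\sigma_A^2$, so that $[1,1+\eta] \subset S_\rho$. For any $\rho \in (1, 1+\eta]$, both $h(\rho) > 0$ and $1 - \sigma_A^2\rho > 0$, so $f_B(\rho) > 0$ and hence $\max f_B > 0$. At the endpoints, $h(1) = \big((2\hat\mu_R+\hat\sigma_R) - (2\hat\mu_R+\hat\sigma_R)\big)/2 = 0$, so $f_B(1)=0$. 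If $1/\sigma_A^2 \in S_\rho$, the prefactor vanishes there, so $f_B(1/\sigma_A^2) = 0$. Since the maximum value is strictly positive, neither endpoint is a maximizer, and every maximizer lies in $S_\rho \setminus \{1, 1/\sigma_A^2\} \subset (1, 1/\sigma_A^2)$.

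The main obstacle is Step 2, namely verifying that the radicand in \eqref{Eqn: f B} is nonnegative on all of $S_\rho$; the Vieta root-sign argument above is how I handle it. I would also flag the degenerate case $\sigma_A = 0$ explicitly as excluded, since then $1/\sigma_A^2$ is undefined and compactness fails.
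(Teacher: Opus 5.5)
Your proof is correct and follows the paper's argument: compactness from closed-and-bounded (Heine--Borel), a point slightly to the right of $\rho = 1$ where $f_B > 0$, and the vanishing of $f_B$ at $\rho = 1$ and $\rho = 1/\sigma_A^2$ to keep the endpoints out of the argmax. Your version is more careful than the paper's in three places: you prove non-emptiness directly via $1 \in S_\rho$, you verify that the radicand is nonnegative so that $f_B$ is continuous and the maximum exists, and you note that $f_B(1/\sigma_A^2) = 0$ matters only when $1/\sigma_A^2 \in S_\rho$.
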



\begin{proof}
See App. \ref{subsec: App, Proof of Lemma: f B properties}.
\end{proof}

Our main theorem below establishes that enforcing reference noise alone can induce a \textit{negative} cost of privacy for an Agent $i \in [N]$, indicating that Agent $i$ may simultaneously enforce privacy while improving its cost performance.

\begin{theorem}
\label{Thm: Negative Cost of Privacy}
Suppose Assumptions \ref{Assump: System Observability}-\ref{Assump: Phi and Z matrices are invertible, Infinite-horizon} hold, and that agents may privatize references but not outputs, i.e.,
{
\setlength{\abovedisplayskip}{3pt}
\setlength{\belowdisplayskip}{3pt}
\begin{align}
\label{Eqn: No output privatization noise, for Negative Cost of Privacy}
    \Sigma_v^j &= 0, \ \ \forall \ j \in [N],
\end{align}
}
$\hspace{1mm}$and that, for some 
$\rho^\star \in \arg\max_{\rho \in S_\rho} f_B(\rho)$:
{
\setlength{\abovedisplayskip}{3pt}
\setlength{\belowdisplayskip}{3pt}
\begin{subequations}
\label{Eqn: System assumptions, for Negative Cost of Privacy}
\begin{align}
\label{Eqn: A1, for Negative Cost of Privacy}
    \sigma_A &< 1, \\
\label{Eqn: A2, for Negative Cost of Privacy}
    \hat \sigma_B &< \sqrt{\frac{\hat \mu_R(1 - \sigma_A^2 \rho^\star)(1 - \sigma_A)}{4N \hat \sigma_Q}}, \\
\label{Eqn: A3, for Negative Cost of Privacy}
    \hat \sigma_B &\leq f_B(\rho^\star) \\
\label{Eqn: A4, for Negative Cost of Privacy}
    \hat \sigma_B &\leq \sqrt{\frac{\hat \mu_R(1 - \sigma_A^2 \rho^\star)(1 - \sigma_A)}{4 \sqrt{N} \hat \sigma_Q \big(4 \sqrt{N} \hat \sigma_R \sigma_A + \hat \mu_R (1 + \sigma_A) \big)}}.
\end{align}
\end{subequations}
}
Moreover, suppose that, for some Agent $i$:
{
\setlength{\abovedisplayskip}{3pt}
\setlength{\belowdisplayskip}{3pt}
\begin{subequations}
\label{Eqn: Agent i assumptions, for Negative Cost of Privacy}
\begin{align}
\label{Eqn: A5, Part 1, for Negative Cost of Privacy}
    &Q^i \not\in \symPSD^n, \\
\label{Eqn: A5, Part 2, for Negative Cost of Privacy}
    &\tr(Q^i \Sigma_{\bar w}) < - \frac{64 N \sqrt{n n_u^i} \hat \sigma_B^2 \hat \sigma_Q^2 \hat \sigma_R \hat \sigma_{\bar w}}{\hat \mu_R^2 (1 - \sigma_A)^2} < 0.
\end{align}
\end{subequations}
}
Then the cost of privacy for Agent $i$ is strictly negative, i.e., $\tr\big( (Q^i + F^\top R^i F) \Sigma_{\bar w} \big) < 0$. 
\end{theorem}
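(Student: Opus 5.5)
By Thm. \ref{Thm: Cost of Privacy} with $\Sigma_v=0$, we have $\bar\Sigma=0$ and $\Sigma=\Sigma_w$. The first two terms in \eqref{Eqn: Cost of Privacy} then cancel, so $\Delta J^i=\tr\big((Q^i+F^\top R^iF)\Sigma_{\bar w}\big)$. The claim therefore reduces to showing that the term $\tr(F^\top R^i F\Sigma_{\bar w})$ is too small in magnitude to offset the negative quantity $\tr(Q^i\Sigma_{\bar w})$ in \eqref{Eqn: A5, Part 2, for Negative Cost of Privacy}. Concretely, I will prove
\[
|\tr(F^\top R^iF\Sigma_{\bar w})|\le \sqrt{n n_u^i}\,\hat\sigma_R\,\hat\sigma_{\bar w}\,\Vert F\Vert_2^2,
\]
using $|\tr(M)|\le \sqrt{\mathrm{rank}}\,\Vert M\Vert_F$-type bounds; the factor $\sqrt{nn_u^i}$ reflects the rank of the relevant block of $F$. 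It then suffices to show
\[
\Vert F\Vert_2\le \frac{8\sqrt N\,\hat\sigma_B\hat\sigma_Q}{\hat\mu_R(1-\sigma_A)}.
\]
Squaring this gives exactly the constant $64N\hat\sigma_B^2\hat\sigma_Q^2/(\hat\mu_R^2(1-\sigma_A)^2)$, and the strict inequality in \eqref{Eqn: A5, Part 2, for Negative Cost of Privacy} finishes the argument.

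\textbf{Step 1: a priori bounds on the Riccati solution.} Using Lemma \ref{Lemma: f B properties} and \eqref{Eqn: A3, for Negative Cost of Privacy}, I will show that $\Vert Z^i\Vert_2\le \hat\sigma_Q\rho^\star/(\ldots)$, more precisely $\Vert Z^i\Vert_2\le \hat\sigma_Q/(1-\sigma_A^2\rho^\star)$-type bounds, together with $\Vert L\Vert_2$ small and $\Vert A+BL\Vert_2^2\le \sigma_A^2\rho^\star$. The mechanism is an invariant-set argument on the recursion \eqref{Eqn: Riccati equations for FBNE under CLIS}. Suppose $\Vert Z_{t+1}^j\Vert\le z$ for all $j$. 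Then $\Phi_t$ has its smallest singular value at least $\hat\mu_R-N\hat\sigma_B^2 z$, so $\Vert L_t\Vert\le \sqrt N\hat\sigma_B z\sigma_A/(\hat\mu_R-N\hat\sigma_B^2z)$. Plugging this into \eqref{Eqn: Z t i} gives $\Vert Z_t^i\Vert\le z$ provided a quadratic inequality in $z$ holds. That quadratic is exactly the one whose discriminant defines $S_\rho$, where $\rho$ parametrizes $\Vert A+BL\Vert^2\le\rho\sigma_A^2$, and whose feasibility is $\hat\sigma_B\le f_B(\rho)$. Since $Z^i$ is a limit point of $Z_t^i$ (Assumption \ref{Assump: Phi and Z matrices are invertible, Infinite-horizon}), the bound passes to the limit. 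Condition \eqref{Eqn: A2, for Negative Cost of Privacy} ensures that $N\hat\sigma_B^2 z\le \hat\mu_R/4$, so $\Vert\Pi\Vert_2\le 2/\hat\mu_R$ (in fact better than that).

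\textbf{Step 2: bound $F$.} From \eqref{Eqn: F}, $\Vert F\Vert\le \Vert\hat\Pi\hat Z^{-1}\hat Q\Vert/(1-\Vert\hat\Pi\hat Z^{-1}\hat L\Vert)$. Note that $\hat Z^{-1}=\diag\{(Z^j)^{-1}(I-(A+BL)^\top)^{-1}\}$, and the $Z^j$ factor cancels against the $Z^j$ appearing in $\hat\Pi$'s blocks $\Pi^{kj}B^{j\top}Z^j$. Hence
\[
\Vert\hat\Pi\hat Z^{-1}\hat Q\Vert\le \sqrt N\,\Vert\Pi\Vert\,\hat\sigma_B\,\hat\sigma_Q/(1-\sigma_A)\le 2\sqrt N\hat\sigma_B\hat\sigma_Q/(\hat\mu_R(1-\sigma_A)),
\]
where I use $\Vert A+BL\Vert\le\sigma_A$-ish (or $\sqrt{\rho^\star}\sigma_A<1$) to get $\Vert (I-(A+BL)^\top)^{-1}\Vert\le 1/(1-\sigma_A)$ after slack. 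Similarly, $\Vert\hat\Pi\hat Z^{-1}\hat L\Vert\le \frac{2\sqrt N\hat\sigma_B}{\hat\mu_R(1-\sigma_A)}\big(\Vert L\Vert\hat\sigma_R+\sigma_A\Vert Z\Vert\hat\sigma_B\big)$. Condition \eqref{Eqn: A4, for Negative Cost of Privacy} is tailored, via the Step 1 bounds on $\Vert L\Vert$ and $\Vert Z\Vert$, to make this at most $3/4$ (or $1/2$). This yields a factor of $4$ (or $2$) in the denominator and hence the constant $8$ overall.

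\textbf{Main obstacle.} The hard part is Step 1: making the invariant-set bound on $Z_t^i$ close with exactly the quadratic in $S_\rho$ and $f_B$. This is also where $Q^i\notin\symPSD^n$ matters, since the $Z^i$ are indefinite and only norm bounds, not Lyapunov positivity, are available. I would first try induction on the backward recursion with hypothesis $\Vert Z_t^j\Vert\le z^\star(\rho^\star)$, chosen as the smaller root of the quadratic, and verify that $\Vert A+BL_t\Vert^2\le\rho^\star\sigma_A^2$ holds simultaneously. The remaining bookkeeping of constants in Step 2 is routine but fiddly, and I expect it to match the stated constants only up to the slack built into \eqref{Eqn: A2, for Negative Cost of Privacy}--\eqref{Eqn: A4, for Negative Cost of Privacy}.
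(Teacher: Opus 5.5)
Your outline follows the paper's chain of lemmas closely. It reduces to a bound on $\Vert F\Vert_2$ via Frobenius Cauchy--Schwarz (Lemma \ref{Lemma: Bounding F}). It uses the factorization $\hat\Pi\hat Z^{-1}=\Pi\,\diag\{B^{j\top}(I-(A+BL)^\top)^{-1}\}$ with a Neumann-type bound on $(I+\hat\Pi\hat Z^{-1}\hat L)^{-1}$ (Lemma \ref{Lemma: Bounding Components of F}). It closes a backward induction on $Z_t^i$ with the quadratic in $\alpha=N\hat\sigma_B^2\hat\sigma_{z,t+1}$ (Lemma \ref{Lemma: Bound on Z (sigma z)}).

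The step that fails as written is the bound on $\Vert(I-(A+BL)^\top)^{-1}\Vert_2$, and the constant in \eqref{Eqn: A5, Part 2, for Negative Cost of Privacy} depends on it. Neither justification you give works. First, $\Vert A+BL\Vert_2\le\sigma_A$ is false in general, since the feedback term $BL$ can increase the norm. Second, $\Vert A+BL\Vert_2\le\sigma_A\sqrt{\rho^\star}$ does follow from your Step 1, but it only yields $1/(1-\sigma_A\sqrt{\rho^\star})$. Since $\rho^\star\in(1,1/\sigma_A^2)$ by Lemma \ref{Lemma: f B properties}, $\sigma_A\sqrt{\rho^\star}$ can be much larger than $\sigma_A$, and nothing in your argument controls $1-\sigma_A\sqrt{\rho^\star}$ in terms of $1-\sigma_A$. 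The same unjustified replacement of $\Vert A+BL\Vert_2$ by $\sigma_A$ appears in your bound on $\hat L$. So your $\tfrac34$ (or $\tfrac12$) estimate is not what \eqref{Eqn: A4, for Negative Cost of Privacy} delivers.

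The missing idea is the role of the factor $(1-\sigma_A)$ in \eqref{Eqn: A2, for Negative Cost of Privacy}. You discard it when you weaken that condition to $N\hat\sigma_B^2 z\le\hat\mu_R/4$ and use it only for $\Vert\Pi\Vert_2$. Keeping it, the bound of Lemma \ref{Lemma: Bound on Z (sigma z)} gives $N\hat\sigma_B^2\hat\sigma_z<\tfrac14(1-\sigma_A)\hat\mu_R$. Hence $\Vert\Pi\Vert_2\le 2/\hat\mu_R$ and
\[
\Vert BL\Vert_2\le\Vert B\Vert_2\,\Vert\Pi\Vert_2\,\Vert H\Vert_2\,\sigma_A\le\frac{2N\hat\sigma_B^2\hat\sigma_z\sigma_A}{\hat\mu_R}\le\tfrac12(1-\sigma_A),
\]
so $\Vert A+BL\Vert_2\le\tfrac12(1+\sigma_A)$ and $\Vert(I-(A+BL)^\top)^{-1}\Vert_2\le 2/(1-\sigma_A)$. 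This is the paper's route in Lemma \ref{Lemma: Final}. Inserting the same bound $\tfrac12(1+\sigma_A)$ on $\Vert A+BL\Vert_2$ into $\Vert\hat L\Vert_2$ produces the $\hat\mu_R(1+\sigma_A)$ term in \eqref{Eqn: A4, for Negative Cost of Privacy}, and hence $\Vert\hat\Pi\hat Z^{-1}\hat L\Vert_2\le\tfrac12$. The constant $8$ is then $2\cdot\frac{2}{\hat\mu_R}\cdot\hat\sigma_B\cdot\frac{2}{1-\sigma_A}\cdot\sqrt N\hat\sigma_Q$.

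Two smaller corrections. First, $Q^i\notin\symPSD^n$ plays no role in bounding $Z_t^i$: the induction uses only norms, and that condition already follows from \eqref{Eqn: A5, Part 2, for Negative Cost of Privacy}. Second, $\rho^\star$ bounds the full amplification factor $(\alpha\hat\sigma_R+\hat\mu_R^2)/(\hat\mu_R-\alpha)^2$, which comes from both $L_t^\top R^iL_t$ and $(A+BL_t)^\top Z_{t+1}^i(A+BL_t)$, not merely $\Vert A+BL\Vert_2^2/\sigma_A^2$. A condition on the latter alone would not produce a quadratic involving $\hat\sigma_R$.
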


We will establish Thm. \ref{Thm: Negative Cost of Privacy} via a sequence of lemmas. 
The first lemma below establishes that bounding $F$ provides an approach for proving Thm. \ref{Thm: Negative Cost of Privacy}.

\begin{lemma}
\label{Lemma: Bounding F}
Under Assumptions \ref{Assump: System Observability}-\ref{Assump: Phi and Z matrices are invertible, Infinite-horizon} and \eqref{Eqn: No output privatization noise, for Negative Cost of Privacy}-\eqref{Eqn: Agent i assumptions, for Negative Cost of Privacy}, if:
{
\setlength{\abovedisplayskip}{3pt}
\setlength{\belowdisplayskip}{3pt}
\begin{align}
\label{Eqn: IR 1, norm of F}
    \Vert F \Vert_2 &< \frac{\sqrt{|\tr(Q^i \Sigma_{\bar w})|}}{(n n_u^i)^{1/4} \sqrt{\hat \sigma_R \hat \sigma_{\bar w}}},
\end{align}
}
$\hspace{-1mm}$then $\tr\big( (Q^i + F^\top R^i F) \Sigma_{\bar w} \big) < 0$.
\end{lemma}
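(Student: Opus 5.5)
The plan is to split $\tr\big((Q^i + F^\top R^i F)\Sigma_{\bar w}\big) = \tr(Q^i \Sigma_{\bar w}) + \tr(F^\top R^i F \Sigma_{\bar w})$. By \eqref{Eqn: A5, Part 2, for Negative Cost of Privacy}, the first term is strictly negative, so $\tr(Q^i\Sigma_{\bar w}) = -|\tr(Q^i\Sigma_{\bar w})|$. It therefore suffices to show
\[
\big|\tr(F^\top R^i F \Sigma_{\bar w})\big| < |\tr(Q^i \Sigma_{\bar w})|
\]
whenever \eqref{Eqn: IR 1, norm of F} holds. Squaring \eqref{Eqn: IR 1, norm of F} gives $\Vert F\Vert_2^2 \sqrt{n n_u^i}\,\hat\sigma_R \hat\sigma_{\bar w} < |\tr(Q^i\Sigma_{\bar w})|$. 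The goal is thus the inequality
\[
\big|\tr(F^\top R^i F \Sigma_{\bar w})\big| \le \sqrt{n n_u^i}\,\hat\sigma_R\,\hat\sigma_{\bar w}\,\Vert F\Vert_2^2 ,
\]
where $\hat\sigma_{\bar w}$ is read as $\Vert\Sigma_{\bar w}\Vert_2$ (its evident meaning in the theorem's hypotheses).

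To obtain this bound, I would use the Cauchy--Schwarz inequality for the Frobenius inner product together with norm equivalences. The first step is
\[
|\tr(F^\top R^i F \Sigma_{\bar w})| \le \Vert F^\top R^i F\Vert_F \Vert \Sigma_{\bar w}\Vert_F .
\]
Then I would bound $\Vert F^\top R^i F\Vert_F \le \Vert F\Vert_2 \Vert R^i F\Vert_F \le \Vert F\Vert_2^2 \Vert R^i\Vert_2 \sqrt{\mathrm{rank}}$ and $\Vert\Sigma_{\bar w}\Vert_F \le \sqrt{n}\,\Vert\Sigma_{\bar w}\Vert_2$.

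The only delicate point is getting exactly the dimensional factor $\sqrt{n n_u^i}$ rather than something like $n$ or $\sqrt{n m}$. Here I would track the dimensions of $F$: it maps the $n$-dimensional reference into control space. In the relevant product, only the agent-$i$-weighted block matters, or the rank of the relevant factor is at most $n_u^i$ after restricting. This yields a $\sqrt{n_u^i}$ factor from one Frobenius-to-spectral conversion and $\sqrt{n}$ from the other.

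If this dimensional accounting fails, the fallback is a cruder constant, which only changes \eqref{Eqn: IR 1, norm of F} by a constant. I expect the intended accounting is $\Vert F^\top R^i F\Vert_F \le \sqrt{n_u^i}\,\Vert F^\top R^i F\Vert_2$, with the rank bounded via the block structure, combined with $\Vert\Sigma_{\bar w}\Vert_F\le\sqrt n\,\hat\sigma_{\bar w}$. Combining these gives $|\tr(F^\top R^i F\Sigma_{\bar w})| \le \sqrt{nn_u^i}\,\hat\sigma_R\hat\sigma_{\bar w}\Vert F\Vert_2^2 < |\tr(Q^i\Sigma_{\bar w})|$, and hence the sum is strictly negative.
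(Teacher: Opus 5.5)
Your route is the paper's route. You isolate $\tr(Q^i\Sigma_{\bar w})<0$, bound the remaining term by Cauchy--Schwarz for the Frobenius inner product, and convert Frobenius norms to spectral norms. The factor $\sqrt n$ from $\Vert\Sigma_{\bar w}\Vert_F\le\sqrt n\,\Vert\Sigma_{\bar w}\Vert_2$ is fine.

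The gap is the step you flagged and then waved through. Your claim that ``only the agent-$i$-weighted block matters,'' or that the relevant rank is at most $n_u^i$, is false under the stated hypotheses. The matrix $R^i=\diag\{R^{i1},\dots,R^{iN}\}$ is $m\times m$ with $m=\sum_j n_u^j$, and the paper explicitly leaves the blocks $R^{ij}$, $j\neq i$, unrestricted. So $\mathrm{rank}(F^\top R^iF)$ can be as large as $\min\{n,m\}$; for instance, $R^i=I_m$ has $\Vert R^i\Vert_F=\sqrt m>\sqrt{n_u^i}$ when $N\ge 2$. In fact the inequality you need, $|\tr(F^\top R^iF\Sigma_{\bar w})|\le\sqrt{n n_u^i}\,\Vert F\Vert_2^2\Vert R^i\Vert_2\Vert\Sigma_{\bar w}\Vert_2$, fails for generic matrices. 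Taking $n=m=4$, $n_u^i=2$ and $F=R^i=\Sigma_{\bar w}=I_4$ gives $4>2\sqrt2$. So no chain of norm inequalities that treats $F$ as an arbitrary matrix can produce the stated constant.

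The paper's own proof has the same defect. It asserts $\Vert R^i\Vert_F\le\sqrt{n_u^i}\,\Vert R^i\Vert_2$, which requires $\mathrm{rank}(R^i)\le n_u^i$. To close the argument you need one of two fixes:
\begin{enumerate}
\item Add a structural hypothesis such as $R^{ij}=0$ for all $j\neq i$, which holds in the paper's numerical examples. Then $\mathrm{rank}(R^i)\le n_u^i$ and your accounting goes through verbatim.
\item Replace $n_u^i$ by $m$, or by $\min\{n,m\}$ using the rank of the $n\times n$ matrix $F^\top R^iF$, in both \eqref{Eqn: IR 1, norm of F} and \eqref{Eqn: A5, Part 2, for Negative Cost of Privacy}.
\end{enumerate}
Your ``fallback'' is the second option. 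It proves a different lemma and changes the hypothesis of Thm.~\ref{Thm: Negative Cost of Privacy}, so it is not a proof of the statement with the factor $(n n_u^i)^{1/4}$.
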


\begin{proof}
See App. \ref{subsec: App, Proof of Lemma: Bounding F}.
\end{proof}

The following lemma in turn establishes a set of sufficient conditions for \eqref{Eqn: IR 1, norm of F}, namely, \eqref{Eqn: IR, all}.

\begin{lemma}
\label{Lemma: Bounding Components of F}
Under Assumptions \ref{Assump: System Observability}-\ref{Assump: Phi and Z matrices are invertible, Infinite-horizon} and \eqref{Eqn: No output privatization noise, for Negative Cost of Privacy}-\eqref{Eqn: Agent i assumptions, for Negative Cost of Privacy}, if:
{
\setlength{\abovedisplayskip}{3pt}
\setlength{\belowdisplayskip}{3pt}
\begin{subequations}
\label{Eqn: IR, all}
\begin{align}
\label{Eqn: IR 3, norm of Pi}
    &\Vert \Pi \Vert_2 \leq \frac{2}{\hat \mu_R}, \\
\label{Eqn: IR 2, norm of Pi, diag B(I - (A + BL) transpose) inv, hat L}
    &\Vert \Pi \ \diag\big\{ B^{i\top} \big( I - (A + BL)^\top \big)^{-1} \big\} \hat L \Vert_2 \leq \frac{1}{2}, \\
\label{Eqn: IR 4 norm of I - (A + BL) transpose inverse}
    &\Vert \big( I - (A + BL)^\top \big)^{-1} \Vert_2 \leq \frac{2}{1 - \sigma_A},
\end{align}
\end{subequations}
}
$\hspace{-1mm}$then \eqref{Eqn: IR 1, norm of F} holds.
\end{lemma}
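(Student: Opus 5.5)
We will bound $\Vert F\Vert_2$ directly from its definition \eqref{Eqn: F} by a Neumann-series estimate. Write $F = (I + \hat\Pi \hat Z^{-1}\hat L)^{-1}\hat\Pi\hat Z^{-1}\hat Q$. The first step is to factor $\hat \Pi \hat Z^{-1}$. Since $\hat Z$ is block diagonal with blocks $(I-(A+BL)^\top)Z^j$, we get $\hat Z^{-1} = \diag\{(Z^j)^{-1}(I-(A+BL)^\top)^{-1}\}$. The $(i,j)$ block of $\hat\Pi$ is $\Pi^{ij}B^{j\top}Z^j$, so the $Z^j$ factors cancel. This gives
\begin{align*}
\hat\Pi\hat Z^{-1} = \Pi \, \diag\big\{ B^{j\top}\big(I-(A+BL)^\top\big)^{-1}\big\}_{j\in[N]} =: \Pi D .
\end{align*}
Consequently $\hat\Pi\hat Z^{-1}\hat L = \Pi D \hat L$, which is exactly the matrix controlled by \eqref{Eqn: IR 2, norm of Pi, diag B(I - (A + BL) transpose) inv, hat L}. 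We will check the block dimensions carefully here, since $\hat Z$ is indexed by agents in $\R^{Nn\times Nn}$ while $\Pi$ is indexed by control blocks. The diagonal-in-$j$ structure of $\hat\Pi$'s columns is what makes the cancellation work.

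Next, by \eqref{Eqn: IR 2, norm of Pi, diag B(I - (A + BL) transpose) inv, hat L} and the Neumann series, $\Vert (I+\Pi D\hat L)^{-1}\Vert_2 \le 1/(1-\tfrac12)=2$. For the remaining factor we use \eqref{Eqn: IR 3, norm of Pi} and \eqref{Eqn: IR 4 norm of I - (A + BL) transpose inverse}. Together with the fact that the spectral norm of a block-diagonal matrix is the maximum block norm, they give
\begin{align*}
\Vert \Pi D \hat Q\Vert_2 \le \Vert\Pi\Vert_2 \, \hat\sigma_B \, \frac{2}{1-\sigma_A}\, \Vert \hat Q\Vert_2 \le \frac{2}{\hat\mu_R}\cdot \hat\sigma_B\cdot\frac{2}{1-\sigma_A}\cdot \sqrt{N}\hat\sigma_Q ,
\end{align*}
where $\Vert\hat Q\Vert_2\le \sqrt N \hat\sigma_Q$ for the vertical stack $\hat Q = \vstack(Q^1,\dots,Q^N)$. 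Hence
\begin{align*}
\Vert F\Vert_2 \le \frac{8\sqrt N \hat\sigma_B\hat\sigma_Q}{\hat\mu_R(1-\sigma_A)}.
\end{align*}

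Finally, we compare this bound with \eqref{Eqn: IR 1, norm of F}. Squaring both sides, it suffices to show
\begin{align*}
\frac{64 N\hat\sigma_B^2\hat\sigma_Q^2}{\hat\mu_R^2(1-\sigma_A)^2} < \frac{|\tr(Q^i\Sigma_{\bar w})|}{\sqrt{n n_u^i}\,\hat\sigma_R\hat\sigma_{\bar w}} .
\end{align*}
Multiplying through, this is exactly \eqref{Eqn: A5, Part 2, for Negative Cost of Privacy}, which states $-\tr(Q^i\Sigma_{\bar w}) > 64N\sqrt{nn_u^i}\hat\sigma_B^2\hat\sigma_Q^2\hat\sigma_R\hat\sigma_{\bar w}/(\hat\mu_R^2(1-\sigma_A)^2)$. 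We use $|\tr(Q^i\Sigma_{\bar w})| = -\tr(Q^i\Sigma_{\bar w})$ since the latter is negative. The constant 64 matches $(2\cdot2\cdot2)^2$, which confirms the chain of factors.

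The main obstacle is the first step: the block bookkeeping showing $\hat\Pi\hat Z^{-1}=\Pi D$. The dimension conventions in \eqref{Eqn: Notation for Cost of Privacy} are loose, for instance whether $\hat Z$ acts on agent-stacked copies of $\R^n$. If the cancellation fails as written, the fallback is to bound $\Vert\hat\Pi\hat Z^{-1}\hat L\Vert_2$ and $\Vert\hat\Pi\hat Z^{-1}\hat Q\Vert_2$ by identifying them blockwise with the expressions appearing in \eqref{Eqn: Invertibility of I + hat Pi hat Z inv hat L, Condition}. That identification is the same one the paper invokes to deduce invertibility of $I+\hat\Pi\hat Z^{-1}\hat L$.
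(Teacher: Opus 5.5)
Your proposal is correct and follows the paper's proof essentially step for step. The steps are the factorization $\hat\Pi\hat Z^{-1} = \Pi\,\diag\{B^{j\top}(I-(A+BL)^\top)^{-1}\}$ (which does go through, since the $Z^j$ in the $j$-th block column of $\hat\Pi$ cancels the $(Z^j)^{-1}$ from the $j$-th diagonal block of $\hat Z^{-1}$), the bound $\Vert(I+\hat\Pi\hat Z^{-1}\hat L)^{-1}\Vert_2\le 2$, submultiplicativity with $\Vert\hat Q\Vert_2\le\sqrt N\hat\sigma_Q$, and the final comparison via \eqref{Eqn: A5, Part 2, for Negative Cost of Privacy}, so your fallback is unnecessary.
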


\begin{proof}
See App. \ref{subsec: App, Proof of Lemma: Bounding Components of F}.
\end{proof}

In the following lemmas, we verify that \eqref{Eqn: IR, all} indeed readily follows from Assumptions \ref{Assump: System Observability}-\ref{Assump: Phi and Z matrices are invertible, Infinite-horizon} and \eqref{Eqn: No output privatization noise, for Negative Cost of Privacy}-\eqref{Eqn: Agent i assumptions, for Negative Cost of Privacy}, thus concluding the proof of Thm. \ref{Thm: Negative Cost of Privacy}. 
For convenience, we define $\hat \sigma_{z,t} := \max_{i \in [N]} \Vert Z_t^i \Vert_2$ and $\hat \sigma_z := \sup_{t \leq T+1} \hat \sigma_{z,t}$.
First, we present conditions under which a bound on $\hat \sigma_z$ can be formulated by leveraging the Riccati equations \eqref{Eqn: Z t i} to recursively bound $\Vert Z_t^i \Vert_2$, for each $i \in [N]$, through all $t \leq T+1$.

\begin{lemma}
\label{Lemma: Bound on Z (sigma z)}
Under Assumption \ref{Assump: Phi and Z matrices are invertible, Infinite-horizon} and \eqref{Eqn: A2, for Negative Cost of Privacy}-\eqref{Eqn: A3, for Negative Cost of Privacy}:
{
\setlength{\abovedisplayskip}{3pt}
\setlength{\belowdisplayskip}{3pt}
\begin{align}
\label{Eqn: Bound on Z (sigma z)}
    \hat \sigma_z &\leq \frac{\hat \sigma_Q}{1 - \sigma_A^2 \rho^\star},
\end{align}
}
where $\rho^\star \in \arg\max_{\rho \in S_\rho} f_B(\rho)$ is as specified in \eqref{Eqn: A3, for Negative Cost of Privacy}.
\end{lemma}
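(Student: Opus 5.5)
We argue by backward induction on $t$ from $T+1$: if $\hat\sigma_{z,t+1}\le \bar\sigma := \hat\sigma_Q/(1-\sigma_A^2\rho^\star)$, then $\hat\sigma_{z,t}\le\bar\sigma$. The base case is immediate, since $Z_{T+1}^i = Q^i$ by \eqref{Eqn: Cost-to-go matrix for Dynamic LQ Games (Z) initialization, at time T+1} and $\hat\sigma_Q \le \bar\sigma$ because $0 < 1-\sigma_A^2\rho^\star \le 1$. The bound on $\hat\sigma_z$ then follows by taking the supremum over $t\le T+1$. Assumption \ref{Assump: Phi and Z matrices are invertible, Infinite-horizon} guarantees that the recursion is well defined, and that the limit $Z^i$ inherits the same bound.

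For the inductive step, apply \eqref{Eqn: Z t i} and the triangle inequality:
\begin{align*}
\Vert Z_t^i\Vert_2 \le \hat\sigma_Q + \hat\sigma_R \Vert L_t\Vert_2^2 + \hat\sigma_{z,t+1}\big(\sigma_A + \Vert B\Vert_2 \Vert L_t\Vert_2\big)^2 .
\end{align*}
We then bound $\Vert L_t\Vert_2 = \Vert \Phi_t^{-1} H_t A\Vert_2$. We have $\Vert H_t\Vert_2 \le \sqrt{N}\,\hat\sigma_B \hat\sigma_{z,t+1}$ and $\Vert B\Vert_2\le \sqrt N \hat\sigma_B$. Writing $\Phi_t = \diag\{R^{jj}\} + H_tB$ and using a Neumann-series / Weyl argument gives $\sigma_{\min}(\Phi_t) \ge \hat\mu_R - N\hat\sigma_B^2\hat\sigma_{z,t+1}$. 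Hence
\begin{align*}
\Vert L_t\Vert_2 \le \frac{\sqrt N \hat\sigma_B \hat\sigma_{z,t+1}\sigma_A}{\hat\mu_R - N\hat\sigma_B^2\hat\sigma_{z,t+1}} .
\end{align*}
Under \eqref{Eqn: A2, for Negative Cost of Privacy} and the inductive hypothesis, $N\hat\sigma_B^2\bar\sigma \le \hat\mu_R/4$, so the denominator is at least $\tfrac{3}{4}\hat\mu_R$ and in particular positive.

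Next we absorb the cross terms with a Young-type inequality: $(\sigma_A + b\ell)^2 \le \rho\,\sigma_A^2 + \tfrac{\rho}{\rho-1}b^2\ell^2$ for $\rho>1$. Substituting $s := \hat\sigma_{z,t+1}\le\bar\sigma$ and $\beta := N\hat\sigma_B^2 s$ reduces the inductive step to the scalar requirement
\begin{align*}
\hat\sigma_Q + \sigma_A^2\rho^\star \bar\sigma + (\text{terms of order } \beta) \le \bar\sigma .
\end{align*}
Since $\hat\sigma_Q + \sigma_A^2\rho^\star\bar\sigma = \bar\sigma$ exactly, it remains to show that the leftover $L_t$-dependent terms are nonpositive, or are absorbed by the slack that $\rho^\star$ provides. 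This is where \eqref{Eqn: A3, for Negative Cost of Privacy} enters. With $\beta \le N\hat\sigma_B^2\bar\sigma$, condition \eqref{Eqn: A3, for Negative Cost of Privacy} says $\beta$ is at most the smaller root of the quadratic $\rho\beta^2 - (2\rho\hat\mu_R + \hat\sigma_R)\beta + (\rho-1) = 0$. Indeed, $f_B(\rho)^2 N\hat\sigma_Q/(1-\sigma_A^2\rho)$ is exactly that smaller root, and $S_\rho$ is the set on which the root is real. Being below the smaller root makes the quadratic nonnegative, which is the inequality that closes the induction.

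The main obstacle is arranging the scalar inequality so that it matches this particular quadratic, i.e.\ choosing how to split the cross term so that the coefficients $2\rho\hat\mu_R + \hat\sigma_R$ and $\rho-1$ appear. I would first try replacing the Neumann bound on $\Phi_t^{-1}$ by the cruder $\Vert\Phi_t^{-1}\Vert_2 \le 1/(\hat\mu_R - \beta)$. I would then multiply the requirement through by $(\hat\mu_R-\beta)^2$ and compare it with $\rho\beta^2 - (2\rho\hat\mu_R+\hat\sigma_R)\beta + \rho - 1 \ge 0$, adjusting the normalization, for instance by rescaling $\beta$ by $\sigma_A^2$ or by $\hat\mu_R$, until the coefficients agree. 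The remaining steps are routine.
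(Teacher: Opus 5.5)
\textbf{Verdict: there is a genuine gap.} Your skeleton matches the paper's: backward induction, the triangle-inequality bound on $\Vert Z_t^i\Vert_2$, and the bounds on $\Vert H_t\Vert_2$, $\sigma_{\min}(\Phi_t)$ and $\Vert L_t\Vert_2$. But you never carry out the step that closes the induction, and the route you sketch for it fails.

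If you apply Young's inequality with weight $\rho^\star$ on $\sigma_A^2$, the leftover terms are $\hat\sigma_R\Vert L_t\Vert_2^2 + s\frac{\rho^\star}{\rho^\star-1}\Vert B\Vert_2^2\Vert L_t\Vert_2^2$. These are nonnegative, and after inserting your bound on $\Vert L_t\Vert_2$ they become $s\sigma_A^2$ times a strictly positive function of $\beta$. Since $\hat\sigma_Q+\sigma_A^2\rho^\star\bar\sigma=\bar\sigma$ exactly, the scalar requirement you arrive at, $\hat\sigma_Q+\sigma_A^2\rho^\star s+(\text{positive})\le\bar\sigma$, is false at $s=\bar\sigma$. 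The inductive hypothesis permits that value, so no slack remains.

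The missing idea is that $\rho^\star$ is not a Young weight. It must dominate the \emph{entire} factor multiplying $s\sigma_A^2$. That factor equals $1$ at $\beta=0$, so the available slack is $\rho^\star-1$.

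Drop Young and argue directly. From $\Vert B\Vert_2\Vert L_t\Vert_2\le\sigma_A\beta/(\hat\mu_R-\beta)$ you get $(\sigma_A+\Vert B\Vert_2\Vert L_t\Vert_2)^2\le\sigma_A^2\hat\mu_R^2/(\hat\mu_R-\beta)^2$. Together with $\hat\sigma_R\Vert L_t\Vert_2^2\le s\sigma_A^2\hat\sigma_R\beta/(\hat\mu_R-\beta)^2$, this gives
\[
\Vert Z_t^i\Vert_2\le\hat\sigma_Q+s\,\sigma_A^2\,\frac{\hat\mu_R^2+\hat\sigma_R\beta}{(\hat\mu_R-\beta)^2}.
\]
The factor is at most $\rho^\star$ iff $\rho^\star(\hat\mu_R-\beta)^2\ge\hat\mu_R^2+\hat\sigma_R\beta$. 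Equivalently, $\rho^\star\beta^2-(2\rho^\star\hat\mu_R+\hat\sigma_R)\beta+(\rho^\star-1)\hat\mu_R^2\ge 0$, which holds when $\beta$ lies below the smaller root. Then $\Vert Z_t^i\Vert_2\le\hat\sigma_Q+\bar\sigma\sigma_A^2\rho^\star=\bar\sigma$. This is the paper's argument. A Young inequality with optimally chosen weight merely reproduces $(\sigma_A+\Vert B\Vert_2\Vert L_t\Vert_2)^2$, so it gains nothing here.

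A second problem is the quadratic you read off from $f_B$. It has constant term $\rho-1$, whereas the inequality that closes the induction has constant term $(\rho-1)\hat\mu_R^2$. The two quadratics differ by $(\rho-1)(\hat\mu_R^2-1)$. No rescaling of $\beta$ makes all three coefficients agree unless $\hat\mu_R=1$, so ``adjusting the normalization until the coefficients agree'' is not a routine step.

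For $\hat\mu_R\ge 1$, the condition in \eqref{Eqn: A3, for Negative Cost of Privacy} as printed still suffices, because the needed quadratic dominates the printed one. For $\hat\mu_R<1$, it does not imply the needed inequality.

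The paper's proof contains the same mismatch. It states the quadratic with $(\rho^\star-1)\hat\mu_R^2$ but quotes its smaller root with $4\rho^\star(\rho^\star-1)$ under the square root. For \eqref{Eqn: A3, for Negative Cost of Privacy} to do its job in general, read the discriminant in $f_B$ as $(2\rho\hat\mu_R+\hat\sigma_R)^2-4\rho(\rho-1)\hat\mu_R^2$.
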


\begin{proof}
See App. \ref{subsec: App, Proof of Bound on Z (sigma z)}.
\end{proof}

Our final lemma below leverages Lemma \ref{Lemma: Bound on Z (sigma z)} to establish that \eqref{Eqn: IR, all} holds under Assumptions \ref{Assump: System Observability}-\ref{Assump: Phi and Z matrices are invertible, Infinite-horizon} and \eqref{Eqn: No output privatization noise, for Negative Cost of Privacy}-\eqref{Eqn: Agent i assumptions, for Negative Cost of Privacy}, and thus completes the proof of Thm. \ref{Thm: Negative Cost of Privacy}.

\begin{lemma}
\label{Lemma: Final}
Under Assumptions \ref{Assump: System Observability}-\ref{Assump: Phi and Z matrices are invertible, Infinite-horizon}, if \eqref{Eqn: No output privatization noise, for Negative Cost of Privacy}-\eqref{Eqn: Agent i assumptions, for Negative Cost of Privacy} hold, then \eqref{Eqn: IR, all} holds.
\end{lemma}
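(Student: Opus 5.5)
We will verify the three inequalities of \eqref{Eqn: IR, all} in the order \eqref{Eqn: IR 3, norm of Pi}, \eqref{Eqn: IR 4 norm of I - (A + BL) transpose inverse}, \eqref{Eqn: IR 2, norm of Pi, diag B(I - (A + BL) transpose) inv, hat L}. Each step uses Lemma \ref{Lemma: Bound on Z (sigma z)}, which applies because \eqref{Eqn: A2, for Negative Cost of Privacy}-\eqref{Eqn: A3, for Negative Cost of Privacy} hold. Passing to the limit in Assumption \ref{Assump: Phi and Z matrices are invertible, Infinite-horizon}, it gives $\Vert Z^i \Vert_2 \leq \hat\sigma_z \leq \hat\sigma_Q/(1-\sigma_A^2\rho^\star)$ for every $i \in [N]$.

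\textbf{Bound on $\Pi$.} Write $\Phi = D + HB$ with $D = \diag\{R^{11},\dots,R^{NN}\}$, so $\sigma_{\min}(D)\geq \hat\mu_R$. The $i$-th block row of $HB$ is $B^{i\top}Z^iB$, so $\Vert HB\Vert_2 \leq \sqrt{N}\,\Vert B\Vert_2\hat\sigma_B\hat\sigma_z \leq N\hat\sigma_B^2\hat\sigma_z$, using $\Vert B\Vert_2\leq\sqrt N\hat\sigma_B$. By \eqref{Eqn: A2, for Negative Cost of Privacy} and the bound on $\hat\sigma_z$,
\[
N\hat\sigma_B^2\hat\sigma_z < \frac{\hat\mu_R(1-\sigma_A)}{4} \leq \frac{\hat\mu_R}{2}.
\]
Hence $\sigma_{\min}(\Phi)\geq \hat\mu_R/2$, which gives \eqref{Eqn: IR 3, norm of Pi}.

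\textbf{Bound on $(I-(A+BL)^\top)^{-1}$.} First, $\Vert L\Vert_2 \leq \Vert\Pi\Vert_2\Vert H\Vert_2\sigma_A \leq (2/\hat\mu_R)\sqrt N\hat\sigma_B\hat\sigma_z\sigma_A$. Then $\Vert BL\Vert_2 \leq 2N\hat\sigma_B^2\hat\sigma_z\sigma_A/\hat\mu_R$, which \eqref{Eqn: A2, for Negative Cost of Privacy} bounds by $\sigma_A(1-\sigma_A)/2 \leq (1-\sigma_A)/2$. Therefore $\Vert A+BL\Vert_2\leq (1+\sigma_A)/2<1$. A Neumann series then gives $\Vert (I-(A+BL)^\top)^{-1}\Vert_2 \leq 2/(1-\sigma_A)$, which is \eqref{Eqn: IR 4 norm of I - (A + BL) transpose inverse}.

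\textbf{Bound on $\Pi\,\diag\{\cdot\}\,\hat L$.} Each block of $\hat L$ satisfies
\[
\Vert L^\top R^i + (A+BL)^\top Z^iB\Vert_2 \leq \hat\sigma_R\Vert L\Vert_2 + \tfrac{1+\sigma_A}{2}\hat\sigma_z\sqrt N\hat\sigma_B .
\]
Stacking the $N$ blocks costs a further factor $\sqrt N$. Substituting $\Vert L\Vert_2 \leq 2\sqrt N\hat\sigma_B\hat\sigma_z\sigma_A/\hat\mu_R$ gives
\[
\Vert\hat L\Vert_2 \leq \sqrt N\hat\sigma_B\hat\sigma_z\sqrt N\Big(\tfrac{2\hat\sigma_R\sigma_A}{\hat\mu_R}+\tfrac{1+\sigma_A}{2}\Big).
\]
The block-diagonal factor has norm at most $\hat\sigma_B\cdot 2/(1-\sigma_A)$, and $\Vert\Pi\Vert_2\leq 2/\hat\mu_R$. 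The product is therefore of order
\[
\frac{N\hat\sigma_B^2\hat\sigma_z\big(4\hat\sigma_R\sigma_A+\hat\mu_R(1+\sigma_A)\big)}{\hat\mu_R^2(1-\sigma_A)}.
\]
After inserting $\hat\sigma_z\le\hat\sigma_Q/(1-\sigma_A^2\rho^\star)$, we will compare this with \eqref{Eqn: A4, for Negative Cost of Privacy}, which is designed to make it at most $1/2$.

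\textbf{Main obstacle.} This last step is the hard one. The powers of $\sqrt N$, and the factor $\hat\mu_R$ versus $\hat\mu_R^2$, must line up exactly with \eqref{Eqn: A4, for Negative Cost of Privacy}, and my crude estimate may overcount $\sqrt N$ factors. If so, I will bound $\hat L$ more carefully. The plan is to treat $\hat L$ as $\vstack$ of blocks and use the $\sqrt N$ stacking factor only once. I will also bound $\Vert B\Vert_2$ directly in terms of $\Vert L\Vert_2$ rather than through $\Vert BL\Vert_2$, and track constants until they match the $4\sqrt N\hat\sigma_R\sigma_A+\hat\mu_R(1+\sigma_A)$ structure in \eqref{Eqn: A4, for Negative Cost of Privacy}.
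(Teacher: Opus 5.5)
Your proofs of \eqref{Eqn: IR 3, norm of Pi} and \eqref{Eqn: IR 4 norm of I - (A + BL) transpose inverse} are correct and coincide with the paper's. The paper lower-bounds the singular values of $\Phi_t$ and passes to the limit, while you bound $\Phi = D + HB$ directly; the difference is immaterial. The gap is \eqref{Eqn: IR 2, norm of Pi, diag B(I - (A + BL) transpose) inv, hat L}: you never reach $\tfrac12$, and the repair you sketch cannot get there. Your estimate is right. With the constant tracked it reads
\[
\Vert \Pi\,\diag\{\cdot\}\,\hat L \Vert_2 \le \frac{2N\hat\sigma_B^2\hat\sigma_Q\big(4\hat\sigma_R\sigma_A+\hat\mu_R(1+\sigma_A)\big)}{\hat\mu_R^2(1-\sigma_A)(1-\sigma_A^2\rho^\star)},
\]
and inserting \eqref{Eqn: A4, for Negative Cost of Privacy} into it yields only
\[
\frac{1}{2\hat\mu_R}\cdot\frac{4\sqrt N\hat\sigma_R\sigma_A+\sqrt N\hat\mu_R(1+\sigma_A)}{4\sqrt N\hat\sigma_R\sigma_A+\hat\mu_R(1+\sigma_A)}.
\]
The extra $\sqrt N$ on the $\hat\mu_R(1+\sigma_A)$ term comes from $\Vert B\Vert_2\le\sqrt N\hat\sigma_B$ in the block $(A+BL)^\top Z^iB$ of $\hat L$. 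That bound is attained (e.g.\ $B^1=\cdots=B^N$), so no regrouping of stacking factors removes it. The paper obtains the structure of \eqref{Eqn: A4, for Negative Cost of Privacy} only by writing $\Vert(A+BL)^\top Z^iB\Vert_2\le\Vert A+BL\Vert_2\hat\sigma_z\hat\sigma_B$, i.e.\ by replacing $\Vert B\Vert_2$ with $\hat\sigma_B$. That needs extra structure such as $B^{i\top}B^j=0$ for $i\ne j$, which holds in the experiments of Sec.~\ref{sec: Numerical Experiments}.

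The factor $1/\hat\mu_R$ is not slack in the estimates either. Even granting the paper's bound $\hat\sigma_B^2\cdot 2\sqrt N\hat\sigma_Q\big(4\sqrt N\sigma_A\hat\sigma_R+\hat\mu_R(1+\sigma_A)\big)/\big(\hat\mu_R^2(1-\sigma_A)(1-\sigma_A^2\rho^\star)\big)$, substituting \eqref{Eqn: A4, for Negative Cost of Privacy} gives exactly $1/(2\hat\mu_R)$. So the paper's last inequality tacitly assumes $\hat\mu_R\ge 1$, which is true for $R^{ii}=I$. A similar tacit normalization appears in the proof of Lemma \ref{Lemma: Bound on Z (sigma z)}, whose root formula omits the factor $\hat\mu_R^2$ in the discriminant.

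The underlying reason is scaling. The map $R^{ij}\mapsto cR^{ij}$, $B^i\mapsto\sqrt c\,B^i$ leaves $Z^i$, $A+BL$, and the left side of \eqref{Eqn: IR 2, norm of Pi, diag B(I - (A + BL) transpose) inv, hat L} unchanged, so the natural small parameter is $\hat\sigma_B^2\hat\sigma_Q/\hat\mu_R$. By contrast, \eqref{Eqn: A4, for Negative Cost of Privacy} only caps $\hat\sigma_B^2\hat\sigma_Q$ by a threshold bounded independently of the scale of $R$ (namely by $(1-\sigma_A)^2/(4\sqrt N)$).

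The step therefore needs an added hypothesis, not sharper bookkeeping. Either assume $\hat\mu_R\ge1$ and $\Vert B\Vert_2\le\hat\sigma_B$, under which the paper's chain closes, or replace \eqref{Eqn: A4, for Negative Cost of Privacy} by
\[
\hat\sigma_B^2\le\frac{\hat\mu_R^2(1-\sigma_A^2\rho^\star)(1-\sigma_A)}{4N\hat\sigma_Q\big(4\hat\sigma_R\sigma_A+\hat\mu_R(1+\sigma_A)\big)},
\]
which is exactly what your (correct) estimate requires.
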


\begin{proof}
See App. \ref{subsec: App, Proof of Lemma: Final}.
\end{proof}

We conclude by describing how Thm. \ref{Thm: Negative Cost of Privacy} can be established from Lemmas \ref{Lemma: Bounding F}-\ref{Lemma: Final}.

\begin{proof}(\textbf{Proof of Thm. \ref{Thm: Negative Cost of Privacy}})
First, Lemma \ref{Lemma: Final} establishes \eqref{Eqn: IR, all} from Assumptions \ref{Assump: System Observability}-\ref{Assump: Phi and Z matrices are invertible, Infinite-horizon} and \eqref{Eqn: No output privatization noise, for Negative Cost of Privacy}-\eqref{Eqn: Agent i assumptions, for Negative Cost of Privacy}.
Then, Lemma \ref{Lemma: Bounding Components of F} establishes \eqref{Eqn: IR 1, norm of F} from \eqref{Eqn: IR, all}.
Finally, Lemma \ref{Lemma: Bounding F} asserts that $\tr\big( (Q^i + F^\top R^i F) \Sigma_{\bar w} \big) < 0$, as desired.
\end{proof}

        
            


\section{Numerical Experiments}
\label{sec: Numerical Experiments}

To corroborate Thm. \ref{Thm: Negative Cost of Privacy}, we computed equilibrium costs associated with the following 2-agent LQG dynamic game $\G_1$, with horizon $T = 100$. We set $A = 0.5 I_4$,
{
\setlength{\abovedisplayskip}{3pt}
\setlength{\belowdisplayskip}{3pt}
\begin{align}
\nonumber
    B^1 = \begin{bmatrix}
        0.01 I_2 \\ O_2
    \end{bmatrix}, \quad
    B^2 = \begin{bmatrix}
        O_2 \\ 0.01 I_2 
    \end{bmatrix},
\end{align}
}
$\hspace{-1.5mm}$$C^1 = C^2 = I_2$, 
$(R^{11}, R^{12}, R^{21}, R^{22}) = (I_2, O_2, O_2, I_2)$, 
$Q^1 = \diag\{-1, -1, 0.1, 0.1\}$, 
$Q^2 = \diag\{0.1, 0.1, -1, -1\}$, 
$\Sigma_w = I_4$,
$\Sigma_{\bar w}^1 = I_2$, 
$\Sigma_{\bar w}^2 = O_2$, and 
$\bar x^1 = (0, 0)$, 
$\bar x^2 = (1, 1)$.
(These parameters satisfy \eqref{Eqn: No output privatization noise, for Negative Cost of Privacy}-\eqref{Eqn: Agent i assumptions, for Negative Cost of Privacy} in the statement of Thm. \ref{Thm: Negative Cost of Privacy}).
The choice of $\Sigma_{\bar w}^1 = I_2$ corresponds to privacy parameters $\delta^1 = 0.01$ and $\epsilon^1 = 1.0$, and adjacency parameter $b^1 = 0.4$ (see Prop. \ref{Prop: Calibrating Privacy and Adjacency Parameters}).
We computed the FBNE strategies $\boldgamma_\steady$ under the CLIS information pattern. We then computed, across $2000$ independently sampled trajectories, the average equilibrium cost $\tilde J^1(x_0, \boldgamma_\steady)$ of Agent 1,
which
was $-6.45$ (resp., $-4.47$) with (resp., without) reference noise, representing an empirical privacy cost of $-1.99$ (see Fig. \ref{Fig: Negative_CoP_distribution___T_100_num_samples_2000.pdf}). 
In comparison, our theory predicted $-1.92$ as the cost of privacy, representing only a $3.5\%$ deviation from our empirical results.
Overall, our results illustrate that it is indeed possible, in some dynamic game settings, for an agent to enforce privacy while improving its cost outcome.

To corroborate Cor. \ref{Cor: Positive Cost of Privacy}, we repeated the same experiment with a 2-agent LQG dynamic game $\G_2$ with nearly the same specification as $\G_1$, except with $Q^1 = \diag\{1, 1, 0.1, 0.1\}$ and $Q^2 = \diag\{0.1, 0.1, 1, 1\}$.
Our results yielded an average equilibrium cost of 8.04 (resp., 6.03) with (resp., without) reference noise, representing an empirical privacy cost of 2.08.
In comparison, our theory predicted $2.01$ as the cost of privacy, representing only a $3.4\%$ deviation from our empirical results.
Our results demonstrate that, when agent objectives are sufficiently aligned, privatization incurs a positive performance cost.


\begin{figure}[t]
    \centering
    \includegraphics[width=0.96\linewidth]{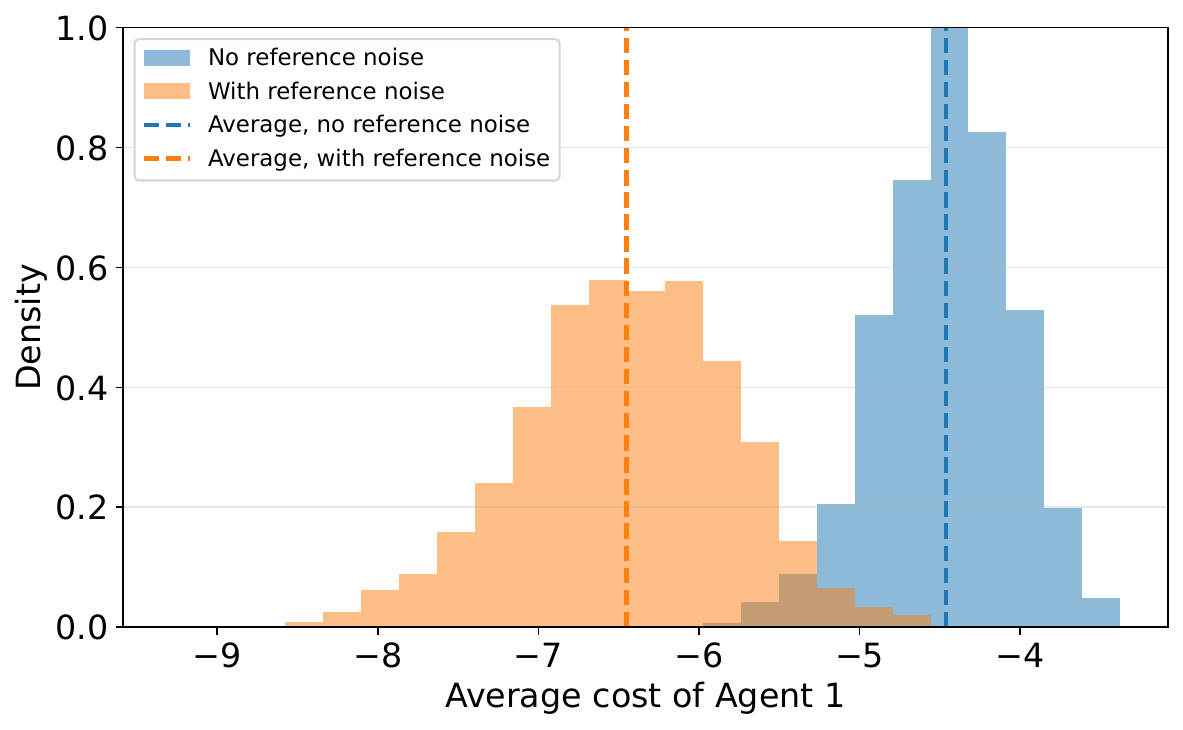}
    \vspace{-3mm}
    \caption{Cost distribution of Agent 1 across $2000$ sampled trajectories, under the dynamics and costs specified by $\G_1$ (see Sec. \ref{sec: Numerical Experiments}).
    Here, Agent 1 simultaneously encodes privacy while lowering its average incurred cost.}
    \vspace{-3mm}
    \label{Fig: Negative_CoP_distribution___T_100_num_samples_2000.pdf}
\end{figure}

\begin{figure}[t]
    \centering
    \includegraphics[width=0.96\linewidth]{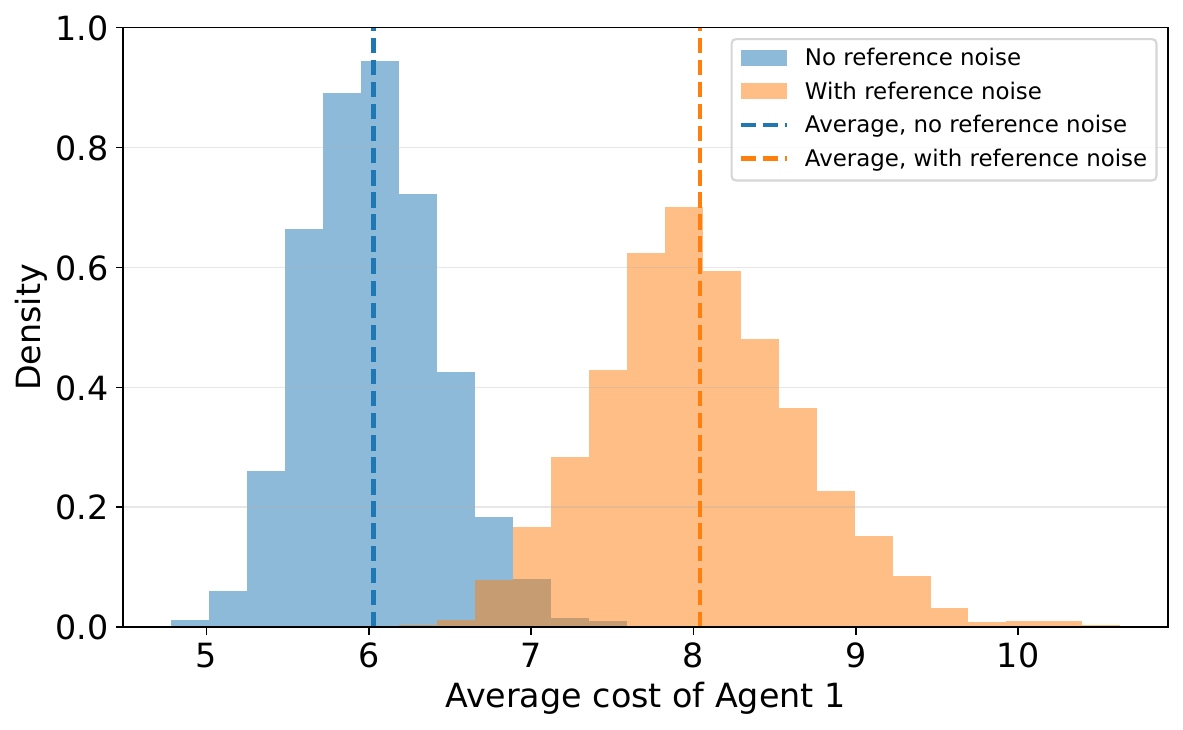}
    \caption{Cost distribution of Agent 1 across $2000$ sampled trajectories, under the dynamics and costs specified by $\G_2$ (see Sec. \ref{sec: Numerical Experiments}).
    Here, Agent 1 incurs a positive cost of privacy.}
    \vspace{-7mm}
    \label{Fig: Positive_CoP_distribution___T_100_num_samples_2000.pdf}
\end{figure}

\section{Conclusion and Future Work}
\label{sec: Conclusion and Future Work}

We studied the cost of differential privacy in stochastic LQ dynamic games with heterogeneous agent objectives. We derived an analytical characterization of each agent's steady-state cost of privacy and established conditions under which privacy noise incurs a positive performance cost. 
Counterintuitively, we also identified a class of games in which appropriately designed reference privatization noise can yield a strictly negative cost of privacy for an agent. 
Future work will investigate cooperative and competitive selection of privacy noise levels, and extend the framework to games with general convex costs and nonlinear constraints.


\printbibliography

@book{basar1998DynamicNoncooperativeGameTheory,
  title={{Dynamic Noncooperative Game Theory}},
  author={Ba{\c{s}}ar, Tamer and Olsder, Geert Jan},
  year={1998},
  publisher={SIAM}
}

@inproceedings{Dwork2006DifferentialPrivacy,
	author = {Dwork, Cynthia},
	title = {{Differential Privacy}},
	booktitle = {International Colloquium on Automata, Languages and Programming, part II (ICALP)},
	year = {2006},
	publisher = {Springer Verlag},
	pages = {1-12},
	journal = {},
	volume = {4052},
}

@Article{Dwork2014AlgorithmicFoundationsOfDifferentialPrivacy,
	author    = {Dwork, Cynthia and Roth, Aaron and others},
	title     = {{The Algorithmic Foundations Of Differential Privacy}},
	journal   = {Foundations and Trends{\textregistered} in Theoretical Computer Science},
	year      = {2014},
	volume    = {9},
	number    = {3--4},
	pages     = {211--407},
	publisher = {Now Publishers, Inc.},
}

@Article{Kasiviswanathan2008OnTheSematicsofDifferentialPrivacy,
	author  = {Kasiviswanathan, Shiva Prasad and Smith, Adam},
	title   = {{On the Semantics of Differential Privacy: A Bayesian Formulation}},
	journal = {arXiv preprint arXiv:0803.3946},
	year    = {2008},
}

@Article{Johnson1991DiscreteandSampledDataStochasticControlProblemswithCompleteandIncompleteStateInformation,
	author="Johnson, A.",
	title="Discrete and sampled-data stochastic control problems with complete and incomplete state information",
	journal="Applied Mathematics and Optimization",
	year="1991",
	volume="24",
	number="1",
	pages="289--316",
}

@inproceedings{LiuLi2025PolicieswithSparseInterAgentDependenciesinDynamicGames,
    author = {Liu, Xinjie and Li, Jingqi and Fotiadis, Filippos and Karabag, Mustafa O. and Milzman, Jesse and Fridovich-Keil, David and Topcu, Uuk},
    title = {{Policies with Sparse Inter-Agent Dependencies in Dynamic Games: A Dynamic Programming Approach}},
    booktitle = {International Conference on Autonomous Agents and Multiagent Systems},
    year = {2025},
}

@INPROCEEDINGS{FridovichKeil2020iLQGames,
  author={Fridovich-Keil, David and Ratner, Ellis and Peters, Lasse and Dragan, Anca and Tomlin, Claire},
  booktitle={IEEE International Conference on Robotics and Automation}, 
  title={{Efficient Iterative Linear-Quadratic Approximations for Nonlinear Multi-Player General-Sum Differential Games}}, 
  year={2020},
  volume={},
  number={},
  pages={1475-1481},
}

@book{Bertsekas1995DynamicProgrammingAndOptimalControl,
author = {Bertsekas, Dimitri P.},
title = {{Dynamic Programming and Optimal Control}},
publisher = {Athena Scientific},
year = {1995},
}

@ARTICLE{Yazdani2023DifferentiallyPrivateLQControl,
  author={Yazdani, Kasra and Jones, Austin and Leahy, Kevin and Hale, Matthew},
  journal={IEEE Transactions on Automatic Control}, 
  title={{Differentially Private LQ Control}}, 
  year={2023},
  volume={68},
  number={2},
  pages={1061-1068},
}

@INPROCEEDINGS{Soudbakhsh2014DelayawareCodesignsforWideAreaControlofPowerGrids,
  author={Soudbakhsh, Damoon and Chakrabortty, Aranya and Annaswamy, Anuradha M.},
  booktitle={IEEE Conference on Decision and Control (CDC)}, 
  title={{Delay-aware Co-designs for Wide-Area Control of Power Grids}}, 
  year={2014},
  volume={},
  number={},
  pages={2493-2498},
}

@INPROCEEDINGS{Singh2016DecentralizedControlofOscillatoryDynamicsinPowerSystemsUsinganExtendedLQR,
  author={Singh, Abhinav and Pal, Bikash},
  booktitle={IEEE Power and Energy Society General Meeting}, 
  title={{Decentralized Control of Oscillatory Dynamics in Power Systems using an Extended LQR}}, 
  year={2016},
  volume={},
  number={},
  pages={1-1},
}

@ARTICLE{McDaniel2009SecurityandPrivacyChallengesintheSmartGrid,
  author={McDaniel, Patrick and McLaughlin, Stephen},
  journal={IEEE Security \& Privacy}, 
  title={{Security and Privacy Challenges in the Smart Grid}}, 
  year={2009},
  volume={7},
  number={3},
  pages={75-77},
}

@ARTICLE{Ndrio2023CoordinatedTransactionSchedulinginMultiAreaElectricityMarkets,
  author={Ndrio, Mariola and Bose, Subhonmesh and Tong, Lang and Guo, Ye},
  journal={IEEE Transactions on Power Systems}, 
  title={{Coordinated Transaction Scheduling in Multi-Area Electricity Markets: Equilibrium and Learning}}, 
  year={2023},
  volume={38},
  number={2},
  pages={996-1008},
}

@ARTICLE{Saad2009CoalitionalGameTheoryforCommunicationNetworks,
  author={Saad, Walid and Han, Zhu and Debbah, Merouane and Hjorungnes, Are and Basar, Tamer},
  journal={IEEE Signal Processing Magazine}, 
  title={{Coalitional Game Theory for Communication Networks}}, 
  year={2009},
  volume={26},
  number={5},
  pages={77-97},
}

@article{BaillonCominetti2008MarkovianTrafficEquilibrium,
author = {Baillon, Jean-Bernard and Cominetti, Roberto},
year = {2008},
pages = {},
title = {{Markovian Traffic Equilibrium}},
journal = {Mathematical Programming},
}

@article{Roughgarden2002HowBadIsSelfishRouting,
  title={{How Bad is Selfish Routing?}},
  author={Roughgarden, Tim and Tardos, {\'E}va},
  journal={Journal of the ACM},
  volume={49},
  number={2},
  pages={236--259},
  year={2002},
  publisher={ACM New York, NY, USA}
}

@article{Ratliff2018APerspectiveonIncentiveDesignChallengesandOpportunities,
author = {Ratliff, Lillian J. and Dong, Roy and Sekar, Shreyas and Fiez, Tanner},
journal = {Annual Review of Control, Robotics, and Autonomous Systems},
number = {1},
pages = {1--34},
title = {{A Perspective on Incentive Design: Challenges and Opportunities}},
volume = {2},
year = {2018}
}

@inproceedings{Nissim2012PrivacyawareMechanismDesign,
author = {Nissim, Kobbi and Orlandi, Claudio and Smorodinsky, Rann},
title = {{Privacy-aware Mechanism Design}},
year = {2012},
booktitle = {ACM Conference on Electronic Commerce},
pages = {774–789},
numpages = {16},
}

@article{Hawkins2026DifferentiallyPrivateFormationControl,
title = {{Differentially Private Formation Control: Privacy and Network Co-design}},
journal = {Systems \& Control Letters},
volume = {213},
pages = {106434},
year = {2026},
author = {Calvin Hawkins and Matthew Hale},
}

@ARTICLE{NyPappas2014DifferentiallyPrivateFiltering,
  author={Le Ny, Jerome and Pappas, George J.},
  journal={IEEE Transactions on Automatic Control}, 
  title={{Differentially Private Filtering}}, 
  year={2014},
  volume={59},
  number={2},
  pages={341-354},
}

@INPROCEEDINGS{HaleJonesLeahy2018PrivacyInFeedback,
  author={Hale, Matthew and Jones, Austin and Leahy, Kevin},
  booktitle={American Control Conference (ACC)}, 
  title={{Privacy in Feedback: The Differentially Private LQG}}, 
  year={2018},
  volume={},
  number={},
  pages={3386-3391},
}

@INPROCEEDINGS{YazdaniHale2020ErrorBoundsandGuidelinesForPrivacyCalibrationinDifferentiallyPrivateKalmanFiltering,
  author={Yazdani, Kasra and Hale, Matthew},
  booktitle={American Control Conference (ACC)}, 
  title={{Error Bounds and Guidelines for Privacy Calibration in Differentially Private Kalman Filtering}}, 
  year={2020},
  volume={},
  number={},
  pages={4423-4428},
}

@InProceedings{chen23,
  title = 	 {{Differential Privacy in Cooperative Multiagent Planning}},
  author =       {Chen, Bo and Hawkins, Calvin and Karabag, Mustafa O. and Neary, Cyrus and Hale, Matthew and Topcu, Ufuk},
  booktitle = 	 {Conference on Uncertainty in Artificial Intelligence},
  pages = 	 {347--357},
  year = 	 {2023},
  volume = 	 {216},  
}

@INPROCEEDINGS{hsu14,
  author={Hsu, Justin and Gaboardi, Marco and Haeberlen, Andreas and Khanna, Sanjeev and Narayan, Arjun and Pierce, Benjamin C. and Roth, Aaron},
  booktitle={IEEE Computer Security Foundations Symposium}, 
  title={Differential Privacy: An Economic Method for Choosing Epsilon}, 
  year={2014},
  volume={},
  number={},
  pages={398-410},
}

\appendix

\subsection{Proof of Prop. \ref{Prop: FBNE under CLIS (LQ)}}
\label{subsec: App, Proof of Prop: FBNE under CLIS (LQ)}


Suppose by induction that, for some $t \in [0, T]$, we have:
\begin{align}
\label{Eqn: Optimal Cost-to-go (V), LQ recursion}
    V_\tau^i(\info_\tau) &= (\hat x_{\tau|\tau} - \tilde \beta_\tau^i)^\top Z_\tau^i (\hat x_{\tau|\tau} - \tilde \beta_\tau^i) + \tilde c_\tau^i, \\
    \nonumber
    &\qquad \forall \ \tau \in [t+1, T+1].
\end{align}
We will prove Prop. \ref{Prop: FBNE under CLIS (LQ)} by establishing that \eqref{Eqn: Optimal Cost-to-go (V), LQ recursion} holds at $t = T+1$, and recursively proving that if \eqref{Eqn: Optimal Cost-to-go (V), LQ recursion} holds for some $t \in [0, T]$, then \eqref{Eqn: Optimal Cost-to-go (V), LQ recursion} also holds at time $t$. 

First, \eqref{Eqn: Optimal Cost-to-go (V), LQ recursion} holds at $t = T+1$, due to the definitions of the maps $W_{T+1}^i(\cdot, \cdot)$ and $V_{T+1}^i(\cdot, \cdot)$, and the quantities $Z_{T+1}^i$, $\tilde \beta_{T+1}^i$, and $\tilde c_{T+1}^i$, respectively defined by \eqref{Eqn: Cost-to-go (W) initialization, at time T+1}, \eqref{Eqn: Optimal Cost-to-go (V), Def}, \eqref{Eqn: Cost-to-go matrix for Dynamic LQ Games (Z) initialization, at time T+1}, \eqref{Eqn: Cost-to-go vector for Dynamic LQ Games (beta) initialization, at time T+1}, and \eqref{Eqn: Cost-to-go bias term for Dynamic LQ Games (c) initialization, at time T+1}.
Next, from \eqref{Eqn: Cost-to-go (W) recursive DP update}:
\begin{align}
\nonumber
    &W_t^i(\info_t, u_t) \\
    = \ &\E_{z_t} \big[ (x_t - \bar x_t)^\top Q^i (x_t - \bar x_t) + u_t^\top R^i u_t + V_{t+1}^i( \info_{t+1}) \big| \info_t \big] \\
\nonumber
    = \ &\E_{z_t} \big[ (x_t - \bar x_t)^\top Q^i (x_t - \bar x_t) + u_t^\top R^i u_t \\
    \nonumber
    &\hspace{1cm} + (
    x_{t+1}
    - \tilde \beta_{t+1}^i)^\top Z_{t+1}^i (x_{t+1} - \tilde \beta_{t+1}^i) + \tilde c_{t+1}^i \big| \info_t \big] \\
\nonumber
    = \ &\E_{z_t} \big[ (x_t - \bar x_t)^\top Q^i (x_t - \bar x_t) + u_t^\top R^i u_t \\ 
    \nonumber
    &\hspace{1cm} + ( \hat x_{t+1|t+1} - \tilde \beta_{t+1}^i)^\top Z_{t+1}^i (\hat x_{t+1|t+1} - \tilde \beta_{t+1}^i) + \\
    \nonumber
    &\hspace{1cm} \tilde c_{t+1}^i \big| \info_t \big] \\
\nonumber
    = \ &\E_{z_t} \big[ (x_t - \bar x_t)^\top Q^i (x_t - \bar x_t) + u_t^\top R^i u_t \\ 
    \nonumber
    &\hspace{1cm} + (A \hat x_{t|t} + B u_t + M_{t+1} e_{t+1} - \tilde \beta_{t+1}^i)^\top Z_{t+1}^i \\
    \nonumber
    &\hspace{1.5cm} (A \hat x_{t|t} + B u_t + M_{t+1} e_{t+1} - \tilde \beta_{t+1}^i) + \tilde c_{t+1}^i \big| \info_t \big] \\
\label{Eqn: Cost-to-go (W) at time t (LQ)}
    = \ &(\hat x_{t|t} - \tilde x_t)^\top Q^i (\hat x_{t|t} - \tilde x_t) + \tr(Q^i \Sigma_{t|t}) \\
    \nonumber
    &\hspace{1cm} + \tilde c_{t+1}^i + \tr(M_{t+1}^\top Z_{t+1}^i M_{t+1} (C \Sigma_{t+1|t} C^\top + \Sigma_v)) \\ 
    \nonumber
    &\hspace{1cm} + u_t^\top R^i u_t + (A \hat x_{t|t} + B u_t - \tilde \beta_{t+1}^i)^\top Z_{t+1}^i \\
    \nonumber
    &\hspace{1.5cm} (A \hat x_{t|t} + B u_t - \tilde \beta_{t+1}^i).
\end{align}
By the definition of Nash equilibria (Def. \ref{Def: FBNE under CLIS}) as tuples of unilaterally optimal strategies, for each $i \in [N]$:
\begin{align}
\nonumber
    0 &= \frac{d}{du_t} W_t^i(\info_t, u_t) \Bigg|_{u_t = \gamma_t^\star(\info_t)} \\
\label{Eqn: First-order Optimality for FBNE under CLIS (LQ)}
    &= (R^{ii} + B^{i\top} Z_{t+1}^i B^i) u_t^i + \sum_{j \in [N] \backslash \{i\}} B^{i\top} Z_{t+1}^i B^j u_t^j \\
    \nonumber
    &\hspace{1cm} + B^{i\top} Z_{t+1}^i (A \hat x_{t|t} - \tilde \beta_{t+1}^i) \Big|_{u_t = \gamma_t^\star(\info_t)} 
\end{align}
Stacking \eqref{Eqn: First-order Optimality for FBNE under CLIS (LQ)} across $i \in [N]$, and applying the definitions of $\Phi_t$, $H_t$, $\tilde h_t$, $L_t$, and $\tilde g_t$, as respectively given by \eqref{Eqn: Phi t}, \eqref{Eqn: H t}, \eqref{Eqn: tilde h t}, \eqref{Eqn: L t}, and \eqref{Eqn: tilde g t}, we obtain:
\begin{align}
\nonumber
    &\Phi_t \gamma_t^\star(\info_t) = - H_t A \hat x_{t|t} + \tilde h_t, \\
\nonumber
    \Ra \ &\gamma_t^\star(\info_t) = - \Phi_t^{-1} H_t A \hat x_{t|t} + \Phi_t^{-1} \tilde h_t = L_t \hat x_{t|t} + \tilde g_t,
\end{align}
which establishes \eqref{Eqn: FBNE under CLIS (LQ)} at time $t$.

It remains to prove that \eqref{Eqn: Optimal Cost-to-go (V), LQ recursion} holds at time $t-1$. 
We proceed by substituting \eqref{Eqn: FBNE under CLIS (LQ)} (with time $t$) into \eqref{Eqn: Cost-to-go (W) at time t (LQ)}, as follows:
\begin{align}
\nonumber
    &\hspace{5mm} V_t^i(\info_t) \\
\nonumber
    &= W_t^i(\info_t, \gamma_t^\star(\info_t)) \\
\nonumber
    &= (\hat x_{t|t} - \tilde x_t)^\top Q^i (\hat x_{t|t} - \tilde x_t) + \tr(Q^i \Sigma_{t|t}) + \tilde c_{t+1}^i \\
    \nonumber
    &\hspace{1cm} + \tr\big( M_{t+1}^\top Z_{t+1}^i M_{t+1} (C \Sigma_{t+1|t} C^\top + \Sigma_v) \big) \\
    \nonumber
    &\hspace{1cm} + (L_t \hat x_{t|t} + \tilde g_t)^\top R^i (L_t \hat x_{t|t} + \tilde g_t) \\
    \nonumber
    &\hspace{1cm} + \big( (A + B L_t) \hat x_{t|t} - \tilde \beta_{t+1}^i \big)^\top Z_{t+1}^i \\
    \nonumber
    &\hspace{1.5cm} \big( (A + B L_t) \hat x_{t|t} - \tilde \beta_{t+1}^i \big) \\
\nonumber
    &= \hat x_{t|t}^\top (Q^i + L_t^\top R^i L_t + (A + B L_t)^\top Z_{t+1}^i (A + B L_t) ) \hat x_{t|t} \\
    \nonumber
    &\hspace{1cm} + 2 \hat x_{t|t}^\top \big( - Q^i \tilde x_t + L_t^\top R^i \tilde g_t \\
    \nonumber
    &\hspace{2.5cm} + (A + B L_t)^\top Z_{t+1}^i (B \tilde g_t - \tilde \beta_{t+1}^i) \big) \\
    \nonumber
    &\hspace{1cm} + \tilde c_{t+1}^i + \tr(Q^i \Sigma_{t|t}) + \tr\big( Z_{t+1}^i (\Sigma_{t+1|t} - \Sigma_{t+1|t+1}) \big) \\ 
    \nonumber
    &\hspace{1cm} + \tilde x_t^\top Q^i \tilde x_t + \tilde g_t^\top R^i \tilde g_t \\\
    \nonumber
    &\hspace{1cm} + (B \tilde g_t - \tilde \beta_{t+1}^i)^\top Z_{t+1}^i (B \tilde g_t - \tilde \beta_{t+1}^i) \\
\nonumber
    &= (\hat x_{t|t} - \tilde \beta_t^i)^\top Z_t^i (\hat x_{t|t} - \tilde \beta_t^i) + \tilde c_t^i,
\end{align}
where the final equality above follows by applying the definitions of $Z_t^i$, $\tilde \beta_t^i$, and $\tilde c_t^i$, given by \eqref{Eqn: Z t i}, \eqref{Eqn: beta t i}, and \eqref{Eqn: c t i}, respectively. 
Thus, \eqref{Eqn: Optimal Cost-to-go (V), LQ recursion} holds at time $t$, and the proof is completed.

\subsection{Proof of Thm. \ref{Thm: Cost of Privacy}}
\label{subsec: App, Proof of Thm. Cost of Privacy}


First, we quantify the impact of output noise privatization on each agent's cost. 
Let $\hat J^i(x_0, \boldgamma_\steady)$ denote the cost of Agent $i$ when the system of $N$ agents applies output privatization but not reference privatization (i.e., $\bar w_t^i = 0$ for each $t \leq T+1, i \in [N]$).
Analogous to the derivations of \cite[(4.12)]{Johnson1991DiscreteandSampledDataStochasticControlProblemswithCompleteandIncompleteStateInformation} and \cite[Thm. 3]{Yazdani2023DifferentiallyPrivateLQControl}, we quantify the cost of output noise privatization by observing that, under zero output noise (i.e., $\Sigma_v = O$), we have $\Sigma_{t|t} = O$ and $\Sigma_{t+1|t} = \Sigma_w$ for each $t \leq T+1$. 
Thus, by contrasting \eqref{Eqn: c t i} as written versus under the setting where $\Sigma_{t|t} = \Sigma_w$ and $\Sigma_{t+1|t} = O$, and taking the steady-state limits $\Sigma_{t+1|t} \ra \Sigma$ and $\Sigma_{t|t} \ra \bar \Sigma$, we obtain that:
\begin{align}
\label{Eqn: Cost of Output Noise Privatization}
    &\hat J^i(x_0, \boldgamma_\steady) - J^i(x_0, \boldgamma_\steady) \\
    \nonumber
    = \ &\tr \big(Z^i \Sigma + (Q^i - Z^i) \bar \Sigma \big) - \tr(Z^i \Sigma_w).
\end{align}
Second, to compute the cost of reference privatization for each agent $i \in [N]$, we will compare the average stage cost at each time $t$ for Agent $i$ under privacy encoding, i.e., $\E\big[(x_t - \tilde x)^\top Q^i (x_t - \tilde x) + \tilde u_t^\top R^i \tilde u_t \big]$ versus without privacy encoding, i.e., the same expression but with $\tilde x_t = \bar x_t$ and $\tilde u_t = \bar u_t$, when all $x_t$ and $u_t$ are given by the steady-state strategies $\boldgamma_\steady$.
To do so, we will leverage the fact that $\tilde x_t = \bar x_t + \bar w_t$ and that (as established below) $\tilde u_t = \bar u_t + F \bar w$.
Concretely, \eqref{Eqn: beta i, Infinite-horizon} implies that, for each agent $j \in [N]$:
{
\small
\begin{align}
\nonumber
    &\hspace{5mm} Z^j \tilde \beta^j \\
    &= Q^j \tilde x - L^\top R^j \tilde g - (A + B L)^\top Z^j (B \tilde g - \tilde \beta^j) \\
\nonumber
    &= Q^j \tilde x - \big( L^\top R^j \Pi + (A + BL)^\top Z^j B \Pi \big) \tilde h + (A + BL)^\top Z^j \tilde \beta^j \\
\nonumber
    &= Q^j \tilde x - \big( L^\top R^j + (A + BL)^\top Z^j B \big) \tilde g + (A + BL)^\top Z^j \tilde \beta^j,
\end{align}
}
where we have applied the fact that $\tilde g := \Pi \tilde h$, and so $\tilde h = \Phi \tilde g$. 
Rearranging terms, we obtain, for each $j \in [N]$:
{
\small
\begin{align}
\nonumber
    &\big( L^\top R^j + (A + BL)^\top Z^j B \big) \tilde g + \big( I - (A + BL)^\top \big) Z^j \tilde \beta_j = Q^j \tilde x.
\end{align}
}
$\hspace{-1mm}$Stacking vertically across $j \in [N]$, and applying the definitions of $\hat L$, $\hat Z$, and $\hat Q$ from \eqref{Eqn: Notation for Cost of Privacy}, we obtain $\hat L \tilde g + \hat Z \tilde \beta = \hat Q \tilde x$. 
Moreover, since $\tilde g = \Pi \tilde h$, we have $\tilde g^i = \sum_{j \in [N]} \Pi^{ij} \tilde h^j = \sum_{j \in [N]} \Pi^{ij} B^{j\top} Z^j \tilde \beta^j$ for each $i \in [N]$, i.e., $\tilde g = \hat \Pi \tilde \beta$. 
Combining these two equations, we have:
\begin{align}
\nonumber
    \tilde g &= \hat \Pi \tilde \beta = \hat \Pi \hat Z^{-1} (- \hat L \tilde g + \hat Q \tilde x) \\
\nonumber
    &= - \hat \Pi \hat Z^{-1} \hat L \tilde g + \hat \Pi \hat Z^{-1} \hat Q \tilde x \\
\nonumber
    \Ra \ \tilde g &= (I + \hat \Pi \hat Z^{-1} \hat L)^{-1} \hat \Pi \hat Z^{-1} \hat Q \tilde x \\
\nonumber
    &= (I + \hat \Pi \hat Z^{-1} \hat L)^{-1} \hat \Pi \hat Z^{-1} \hat Q (\bar x + \bar w) \\
    &= \bar g + F \bar w.
\end{align}
Thus, under $\boldgamma_\steady$, for any state $x$, and any $i \in [N]$ and $t \leq T+1$, we have:
{
\small
\begin{align}
\nonumber
    &\E\big[(x_t - \tilde x)^\top Q^i (x_t - \tilde x) + \tilde u_t^\top R^i \tilde u_t \big] \\
\nonumber
    = \ &\E\big[(x_t - \bar x - \bar w)^\top Q^i (x_t - \bar x - \bar w) \\
    \nonumber
    &\hspace{1cm} + (u_t + F \bar w)^\top R^i (u_t + F \bar w) \big] \\
\nonumber
    = \ &\E\big[(x_t - \bar x)^\top Q^i (x_t - \bar x) + u_t^\top R^i u_t \big] + \tr\big( (Q^i + F^\top R^i F) \Sigma_{\bar w} \big),
\end{align}
}
which establishes that each Agent $i$'s cost of reference noise privatization is:
\begin{align}
\label{Eqn: Cost of Reference Noise Privatization}
    \tilde J^i(x_0, \boldgamma_\steady) - \hat J^i(x_0, \boldgamma_\steady) &=  \tr\big( (Q^i + F^\top R^i F) \Sigma_{\bar w} \big).
\end{align}
Combining \eqref{Eqn: Cost of Output Noise Privatization} and \eqref{Eqn: Cost of Reference Noise Privatization} gives
\eqref{Eqn: Cost of Privacy} and concludes the proof.

\subsection{Proof of Cor. \ref{Cor: Positive Cost of Privacy}}
\label{subsec: App, Proof of Cor: Positive Cost of Privacy}




Since
$Q^i + F^\top R^i F$ is symmetric positive semidefinite when $Q^i$ is symmetric positive semidefinite, and since $\Sigma_{\bar w}$ is symmetric positive definite as well, we have $\tr\big( (Q^i + F^\top R^i F) \Sigma_{\bar w} \big) \geq 0$. We thus conclude that $\Delta J^i(x_0, \boldgamma_\steady) \geq 0$ for each $i \in [N]$, as claimed.

\subsection{Proof of Lemma \ref{Lemma: f B properties}}
\label{subsec: App, Proof of Lemma: f B properties}


First, $S_\rho$ is the pre-image of the closed set with respect to the continuous map $\rho \mapsto (2\rho \hat \mu_R + \hat \sigma_R)^2 - 4\rho(\rho - 1)$, and is thus closed. 
Moreover, by definition, $S_\rho \subset \big[1, 1/\sigma_A^2 \big]$, and is thus bounded. 
Thus, by the Heine-Borel Theorem, $S_\rho$ is compact, as claimed.

Next, since the map $\rho \mapsto 4\rho(\rho-1)$ returns $0$ at $\rho = 1$, and is strictly increasing and continuous over $[1, \infty)$, there exists some $\bar\rho \in \big(1, 1/\sigma_A^2\big)$ such that $0 < 4 \bar\rho (\bar \rho - 1) < (2 \bar \rho \hat \mu_R + \hat \sigma_R)^2$, and thus $f_B(\bar \rho) > 0$. Thus, $\max_{\rho \in S_\rho} f_B(\rho) > 0$, as claimed.

Finally, since $f_B(\rho) = 0$ at $\rho = 1$ and $\rho = 1/\sigma_A^2$, we find that $\arg\max_{\rho \in S_\rho} f_B(\rho) \subset (1, 1/\sigma_A^2)$, as claimed.

\subsection{Proof of Lemma \ref{Lemma: Bounding F}}
\label{subsec: App, Proof of Lemma: Bounding F}

We have:
{
\setlength{\abovedisplayskip}{3pt}
\setlength{\belowdisplayskip}{3pt}
\begin{align}
\nonumber
    &\hspace{5mm} \tr\big( (Q^i + F^\top R^i F) \Sigma_{\bar w} \big) \\
\nonumber
    &\leq \tr(Q^i \Sigma_{\bar w}) + \Vert F^\top R^i F \Vert_F \Vert \Sigma_{\bar w} \Vert_F \\
\nonumber
    &\leq \tr(Q^i \Sigma_{\bar w}) + \Vert F \Vert_2^2 \Vert R^i \Vert_F \Vert \Sigma_{\bar w} \Vert_F \\
\nonumber
    &\leq \tr(Q^i \Sigma_{\bar w}) + \frac{|\tr(Q^i \Sigma_{\bar w})|}{\sqrt{n n_u^i} \hat \sigma_R \hat \sigma_{\bar w}} \cdot \sqrt{n_u^i} \Vert R^i \Vert_2 \cdot \sqrt{n} \Vert \Sigma_{\bar w} \Vert_2, \\
\nonumber
    &< 0,
\end{align}
}
$\hspace{-1mm}$where the first two inequalities above follow by applying the Cauchy-Schwarz inequality to the Frobenius inner product, and applying the identity that, for any matrices $X, Y \in \R^{n_a \times n_b}$, we have $\Vert X^\top Y \Vert_F \leq \Vert X \Vert_2 \Vert Y \Vert_F$ and $\Vert X \Vert_F \leq \sqrt{\min\{n_a, n_b\}} \Vert X \Vert_2$.

\subsection{Proof of Lemma \ref{Lemma: Bounding Components of F}}
\label{subsec: App, Proof of Lemma: Bounding Components of F}

First, from the definition of $F$ in \eqref{Eqn: F}, we have:
{
\setlength{\abovedisplayskip}{3pt}
\setlength{\belowdisplayskip}{3pt}
\begin{align}
\nonumber
    F &= (I + \hat \Pi \hat Z^{-1} \hat L)^{-1} \hat \Pi \hat Z^{-1} \hat Q \\
\label{Eqn: F, alternative expression}
    &= \big( I + \Pi \ \diag\big\{ B^{i\top} \big( I - (A + BL)^\top \big)^{-1} \big\} \hat L \big)^{-1} \\
\nonumber
    &\hspace{1cm} \cdot \Pi \ \diag\big\{ B^{i\top} \big( I - (A + BL)^\top \big)^{-1} \big\} \hat Q,
\end{align}
}
$\hspace{-1mm}$where \eqref{Eqn: F, alternative expression} arises by applying standard matrix manipulations to \eqref{Eqn: hat Z}-\eqref{Eqn: hat Pi} to establish that $\hat \Pi \hat Z^{-1} = \Pi \ \diag\big\{ B^{i\top} \big( I - (A + BL)^\top \big)^{-1} \big\}$.
From \eqref{Eqn: IR 2, norm of Pi, diag B(I - (A + BL) transpose) inv, hat L}, we find that the singular values of $I + \hat \Pi \hat Z^{-1} \hat L$ must be no less than $1 - 1/2 = 1/2$, and thus:
{
\setlength{\abovedisplayskip}{3pt}
\setlength{\belowdisplayskip}{3pt}
\begin{align}
\label{Eqn: R1, IR 2, norm of Pi, diag B(I - (A + BL) transpose) inv, hat L}
    &\Vert \big( I + \Pi \ \diag\big\{ B^{i\top} \big( I - (A + BL)^\top \big)^{-1} \big\} \hat L \big)^{-1} \Vert_2 \leq 2.
\end{align}
}
$\hspace{-1mm}$Applying the Cauchy-Schwarz inequality to \eqref{Eqn: F, alternative expression} and applying \eqref{Eqn: R1, IR 2, norm of Pi, diag B(I - (A + BL) transpose) inv, hat L}, \eqref{Eqn: IR 3, norm of Pi}, and \eqref{Eqn: IR 4 norm of I - (A + BL) transpose inverse}, we find that:
{
\setlength{\abovedisplayskip}{3pt}
\setlength{\belowdisplayskip}{3pt}
\begin{align}
\nonumber
    \Vert F \Vert_2 &= \Vert \big( I + \Pi \ \diag\big\{ B^{i\top} \big( I - (A + BL)^\top \big)^{-1} \big\} \hat L \big)^{-1} \Vert_2 \\
\nonumber
    &\hspace{5mm} \cdot \Vert \Pi \Vert_2 \ \Vert \diag\big\{ B^{i\top} \big( I - (A + BL)^\top \big)^{-1} \big\} \Vert_2 \Vert \hat Q \Vert_2 \\
\nonumber
    &\leq 2 \cdot \frac{2}{\hat \mu_R} \cdot \hat \sigma_B \cdot \frac{2}{1 - \sigma_A} \cdot \sqrt{N} \hat \sigma_Q \\
\nonumber
    &= \frac{8\sqrt{N} \hat \sigma_B \hat \sigma_Q}{\hat \mu_R (1 - \sigma_A)} \\
\nonumber
    &< \frac{\sqrt{|\tr(Q^i \Sigma_{\bar w})|}}{(n n_u^i)^{1/4} \sqrt{\hat \sigma_R \hat \sigma_{\bar w}}},
\end{align}
}
$\hspace{-1mm}$which confirms \eqref{Eqn: IR 1, norm of F}.
Note that we have used the fact that, from \eqref{Eqn: hat Q}, $\Vert \hat Q \Vert_2 \leq \sqrt{\sum_{i=1}^N \Vert Q^i \Vert_2^2} \leq \sqrt{N} \hat \sigma_Q$, and that the final inequality follows from \eqref{Eqn: A5, Part 2, for Negative Cost of Privacy}.

\subsection{Proof of Lemma \ref{Lemma: Bound on Z (sigma z)}}
\label{subsec: App, Proof of Bound on Z (sigma z)}

We will establish \eqref{Eqn: Bound on Z (sigma z)} by establishing, via backward induction over \eqref{Eqn: Z t i} from $t = T+1$, that:
{
\setlength{\abovedisplayskip}{3pt}
\setlength{\belowdisplayskip}{3pt}
\begin{align}
\label{Eqn: Uniform Bound on Z t (sigma z,t)}
    \hat \sigma_{z,t} &\leq \frac{\hat \sigma_Q}{1 - \sigma_A^2 \rho^\star}, \hspace{5mm} \forall t \leq T+1.
\end{align}
}
$\hspace{-1mm}$Since $Z_t^i \ra Z^i$ for each $i \in [N]$ as $t \ra - \infty$, and the norm map $X \mapsto \Vert X \Vert_2$ is continuous, \eqref{Eqn: Uniform Bound on Z t (sigma z,t)} implies \eqref{Eqn: Bound on Z (sigma z)}.

First, 
since $Z_{T+1}^i = Q^i$ for each $i \in [N]$:
{
\setlength{\abovedisplayskip}{3pt}
\setlength{\belowdisplayskip}{3pt}
\begin{align}
\nonumber
    \hat \sigma_{z,T+1} &= 
    \hat \sigma_Q \leq \frac{\hat \sigma_Q}{1 - \sigma_A^2 \rho^\star}.
\end{align}
}
$\hspace{-1mm}$Next, suppose that for some $t \leq T$:
{
\setlength{\abovedisplayskip}{3pt}
\setlength{\belowdisplayskip}{3pt}
\begin{align}
\label{Eqn: sigma z t+1 bound, Induction hypothesis}
    \hat \sigma_{z,t+1} &\leq \frac{\hat \sigma_Q}{1 - \sigma_A^2 \rho^\star}.
\end{align}
}
$\hspace{-1.5mm}$From \eqref{Eqn: A3, for Negative Cost of Privacy}, we have $\hat \sigma_B \leq f_B(\rho^\star)$.
By applying the definition of $f_B(\cdot)$ from \eqref{Eqn: f B}, rearranging terms, and applying the induction hypothesis, we have:
{
\small
\setlength{\abovedisplayskip}{3pt}
\setlength{\belowdisplayskip}{3pt}
\begin{align}
\nonumber
    &\hspace{5mm} N \hat \sigma_B^2 \hat \sigma_{z,t+1} \\
\nonumber
    &\leq \frac{N \hat \sigma_B^2 \hat \sigma_Q}{1 - \sigma_A^2 \rho^\star} \\
\label{Eqn: Smaller real root w.r.t. map defined relative to f B}
    &\leq \frac{2\rho^\star \hat \mu_R + \hat \sigma_R}{2 \rho^\star} - \frac{\sqrt{(2\rho^\star \hat \mu_R + \hat \sigma_R)^2 - 4\rho^\star(\rho^\star - 1)}}{2\rho^\star}.
\end{align}
}
$\hspace{-1mm}$Since the map $\alpha \mapsto \rho^\star \alpha^2 - (2\rho^\star \hat \mu_R + \hat \sigma_R) \alpha + (\rho^\star - 1) \hat \mu_R^2$ is strictly convex with two real roots, the smaller of which is given by \eqref{Eqn: Smaller real root w.r.t. map defined relative to f B}, the above inequalities imply that this map evaluates to a non-negative value when $\alpha = N \hat \sigma_B^2 \hat \sigma_{z,t+1}$:
{
\setlength{\abovedisplayskip}{3pt}
\setlength{\belowdisplayskip}{3pt}
\begin{align}
\nonumber
    &\rho^\star (N \hat \sigma_B^2 \hat \sigma_{z,t+1})^2 - (2\rho^\star \hat \mu_R + \hat \sigma_R) N \hat \sigma_B^2 \hat \sigma_{z,t+1} \\
    \nonumber
    &\hspace{2cm} + (\rho^\star - 1) \hat \mu_R^2 \geq 0.
\end{align}
}
$\hspace{-1.5mm}$Rearranging terms, we obtain:
{
\setlength{\abovedisplayskip}{3pt}
\setlength{\belowdisplayskip}{3pt}
\begin{align}
\label{Eqn: Fraction bounded by rho star}
    \frac{N \hat \sigma_B^2 \hat \sigma_{z,t+1} \hat \sigma_R + \hat \mu_R^2}{(\hat \mu_R - N \hat \sigma_B^2 \hat \sigma_{z,t+1})^2} \leq \rho^\star.
\end{align}
}
$\hspace{-1.5mm}$We note in particular that $\hat \mu_R - N \hat \sigma_B^2 \hat \sigma_{z,t+1} > 0$, as implied by \eqref{Eqn: A2, for Negative Cost of Privacy} and \eqref{Eqn: sigma z t+1 bound, Induction hypothesis}:
{
\setlength{\abovedisplayskip}{3pt}
\setlength{\belowdisplayskip}{3pt}
\begin{align}
\nonumber
    \hat \mu_R - N \hat \sigma_B^2 \hat \sigma_{z,t+1} &\geq \hat \mu_R (1 - \sigma_A) - \frac{N \hat \sigma_B^2 \hat \sigma_Q}{1 - \sigma_A^2 \rho^\star} \\
    &> \hat \mu_R (1 - \sigma_A) - \frac{4N \hat \sigma_B^2 \hat \sigma_Q}{1 - \sigma_A^2 \rho^\star} > 0.
\end{align}
}
We now leverage \eqref{Eqn: Fraction bounded by rho star} to bound $\hat \sigma_{z,t}$ through \eqref{Eqn: Z t i} and complete the inductive proof.
First, from \eqref{Eqn: Z t i} and \eqref{Eqn: L t}:
{
\setlength{\abovedisplayskip}{3pt}
\setlength{\belowdisplayskip}{3pt}
\begin{align}
\nonumber
    Z_t^i &= Q^i + L_t^\top R^i L_t + (A + B L_t)^\top Z_{t+1}^i (A + B L_t) \\
\label{Eqn: Expansion of Z t i}
    &= Q^i + A^\top H_t^\top \Phi_t^{-1} R^i \Phi_t^{-1} H_t A \\
\nonumber
    &\hspace{5mm} + A^\top (I - H_t^\top \Phi_t^{-1\top} B^\top) Z_{t+1}^i (I - B \Phi_t^{-1} H_t) A.
\end{align}
}
$\hspace{-1mm}$Thus, to bound $\Vert Z_t^i \Vert_2$, we must establish bounds over $\Vert H_t \Vert_2$ and $\Vert \Phi_t^{-1} \Vert_2$.
First, by applying the Cauchy-Schwarz inequality to \eqref{Eqn: H t}, we have:
{
\setlength{\abovedisplayskip}{3pt}
\setlength{\belowdisplayskip}{3pt}
\begin{align}
\label{Eqn: H t, bound}
    \Vert H_t \Vert_2 &\leq \sqrt{N} \hat \sigma_B \hat \sigma_{z,t+1}.
\end{align}
}
$\hspace{-1mm}$Next, by definition of $\Phi_t$ in \eqref{Eqn: Phi t}, for each $v \in \R^m$ with $\Vert v \Vert_2 = 1$, we have:
{
\setlength{\abovedisplayskip}{3pt}
\setlength{\belowdisplayskip}{3pt}
\begin{align}
\nonumber
    \Vert \Phi_t v \Vert_2 &= \Vert \diag\{R^{ii}: i \in [N]\} v + H_t B v \Vert_2 \\
\nonumber
    &\geq \Vert \diag\{R^{ii}: i \in [N]\} v \Vert_2 - \Vert H_t B v \Vert_2 \\
\label{Eqn: Phi t, lower bound on singular values}
    &\geq \hat \mu_R \Vert v \Vert_2 - N \hat \sigma_B^2 \hat \sigma_{z,t+1} \Vert v \Vert_2.
\end{align}
}
$\hspace{-1.5mm}$Thus, the singular values of $\Phi_t$ are no less than $\hat \mu_R - N \hat \sigma_B^2 \hat \sigma_{z,t+1}$, so:
{
\setlength{\abovedisplayskip}{1pt}
\setlength{\belowdisplayskip}{3pt}
\begin{align}
\label{Eqn: Phi t inverse, bound}
    \Vert \Phi_t^{-1} \Vert_2 \leq &\frac{1}{\hat \mu_R - N \hat \sigma_B^2 \hat \sigma_{z,t+1}}.
\end{align}
}
$\hspace{-1mm}$From \eqref{Eqn: Expansion of Z t i} and the bounds \eqref{Eqn: H t, bound} and \eqref{Eqn: Phi t inverse, bound}, we thus obtain after some algebraic manipulations that, for each $i \in [N]$:
{
\setlength{\abovedisplayskip}{3pt}
\setlength{\belowdisplayskip}{3pt}
\begin{align}
\nonumber
    \Vert Z_t^i \Vert_2 &\leq \hat \sigma_Q + \frac{\sigma_A^2 \cdot (\sqrt{N} \hat \sigma_B \hat \sigma_{z,t+1})^2}{(\hat \mu_R - N \hat \sigma_B^2 \hat \sigma_{z,t+1})^2} \hat \sigma_R \\
    \nonumber
    &\hspace{5mm} + \sigma_A^2 \left( 1 + \frac{N \hat \sigma_B^2 \hat \sigma_{z,t+1}}{\hat \mu_R - N \hat \sigma_B^2 \hat \sigma_{z,t+1}} \right)^2 \hat \sigma_{z,t+1} \\
\nonumber
    &= \hat \sigma_Q + \hat \sigma_{z,t+1} \sigma_A^2 \cdot \frac{N \hat \sigma_B^2 \hat \sigma_{z,t+1} \hat \sigma_R + \hat \mu_R^2}{(\hat \mu_R - N \hat \sigma_B^2 \hat \sigma_{z,t+1})^2} \\
\nonumber
    &\leq \hat \sigma_Q + \frac{\hat \sigma_Q}{1 - \sigma_A^2 \rho^\star} \cdot \sigma_A^2 \rho^\star \\
\nonumber
    &= \frac{\hat \sigma_Q}{1 - \sigma_A^2 \rho^\star}
\end{align}
}
$\hspace{-1mm}$Maximizing over all $i \in [N]$ completes the induction step, and thus completes the proof.

\subsection{Proof of Lemma \ref{Lemma: Final}}
\label{subsec: App, Proof of Lemma: Final}

First, we prove \eqref{Eqn: IR 3, norm of Pi}. 
From
\eqref{Eqn: Bound on Z (sigma z)} in Lemma \ref{Lemma: Bound on Z (sigma z)} and \eqref{Eqn: A2, for Negative Cost of Privacy}:
{
\setlength{\abovedisplayskip}{3pt}
\setlength{\belowdisplayskip}{3pt}
\begin{align}
    N \hat \sigma_B^2 \hat \sigma_z &\leq \frac{N \hat \sigma_B^2 \hat \sigma_Q}{1 - \sigma_A^2 \rho^\star} \leq \frac{1}{4} (1 - \sigma_A) \hat \mu_R < \frac{1}{2} \hat \mu_R.
\end{align}
}
Thus, following the argument from \eqref{Eqn: Phi t, lower bound on singular values}, the singular values of $\Phi_t$ (and thus, those of $\Phi$) are no less than $\hat \mu_R - N \hat \sigma_B^2 \hat \sigma_{z,t+1} \geq \frac{1}{2} \hat \mu_R$, from which \eqref{Eqn: IR 3, norm of Pi} follows.

Second, to prove \eqref{Eqn: IR 4 norm of I - (A + BL) transpose inverse}, we first apply the Cauchy-Schwarz inequality to \eqref{Eqn: H, Infinite-Horizon} to obtain $\Vert H \Vert_2 \leq \sqrt{N} \hat \sigma_B \hat \sigma_z$. Then:
{
\setlength{\abovedisplayskip}{3pt}
\setlength{\belowdisplayskip}{3pt}
\begin{align}
\nonumber
    \Vert BL \Vert_2 &= \Vert B \Pi H A \Vert_2 \leq \Vert B \Vert_2 \Vert \Pi \Vert_2 \Vert H \Vert_2 \sigma_A \\
\label{Eqn: Bound on BL, Step 1}
    &\leq \sqrt{N} \hat \sigma_B \cdot \frac{2}{\hat \mu_R} \cdot \sqrt{N} \hat \sigma_B \hat \sigma_z \cdot \sigma_A \\
\label{Eqn: Bound on BL, Step 2}
    &\leq \sqrt{N} \hat \sigma_B \cdot \frac{2}{\hat \mu_R} \cdot \sqrt{N} \hat \sigma_B \cdot \frac{\hat \sigma_Q}{1 - \sigma_A^2 \rho^\star} \cdot \sigma_A \\
\nonumber
    &= \frac{2N \hat \sigma_B^2 \hat \sigma_Q \sigma_A}{\hat \mu_R (1 - \sigma_A^2 \rho^\star)} \\
\label{Eqn: Bound on BL, Step 3}
    &\leq \frac{1}{2}(1 - \sigma_A),
\end{align}
}
$\hspace{-1mm}$where \eqref{Eqn: Bound on BL, Step 1} follows by applying \eqref{Eqn: IR 3, norm of Pi} and the bound $\Vert H \Vert_2 \leq \sqrt{N} \hat \sigma_B \hat \sigma_z$, \eqref{Eqn: Bound on BL, Step 2} follows from \eqref{Eqn: Bound on Z (sigma z)}, and \eqref{Eqn: Bound on BL, Step 3} follows from \eqref{Eqn: A1, for Negative Cost of Privacy} and \eqref{Eqn: A2, for Negative Cost of Privacy}. 
Thus, from the triangle inequality, $\Vert A + BL \Vert_2 \leq \sigma_A + \frac{1}{2}(1 - \sigma_A) = \frac{1}{2} (1 + \sigma_A)$.
It then follows that the singular values of $I - (A + BL)$ must be no less than $1 - \frac{1}{2}(1 + \sigma_A) = \frac{1}{2}(1 - \sigma_A)$, from which \eqref{Eqn: IR 4 norm of I - (A + BL) transpose inverse} follows.

Finally, to establish \eqref{Eqn: IR 2, norm of Pi, diag B(I - (A + BL) transpose) inv, hat L}, we first bound $L$. Similar to the derivation of \eqref{Eqn: Bound on BL, Step 2}:
{
\setlength{\abovedisplayskip}{1pt}
\setlength{\belowdisplayskip}{3pt}
\begin{align}
\label{Eqn: Bound on L}
    \Vert L \Vert_2 &\leq \frac{2\sqrt{N} \hat \sigma_B \hat \sigma_Q \sigma_A}{\hat \mu_R (1 - \sigma_A^2 \rho^\star)}. 
\end{align}
}
$\hspace{-1.5mm}$Also, from \eqref{Eqn: Bound on BL, Step 3} and the ensuing discussion, $\Vert A + BL \Vert_2 \leq \frac{1}{2}(1 + \sigma_A)$.
Thus, via \eqref{Eqn: hat L}, \eqref{Eqn: Bound on L}, and \eqref{Eqn: Bound on Z (sigma z)} (from Lemma \ref{Lemma: Bound on Z (sigma z)}):
{
\small
\begin{align}
\nonumber
    \Vert \hat L \Vert_2 &\leq \sqrt{N} (\Vert L \Vert_2 + \Vert A + BL \Vert_2 \hat \sigma_z \hat \sigma_B) \\
\nonumber
    &\leq \sqrt{N} \Bigg(\frac{2\sqrt{N} \hat \sigma_B \hat \sigma_Q \sigma_A}{\hat \mu_R (1 - \sigma_A^2 \rho^\star)}
    + \frac{1}{2}(1 + \sigma_A) \cdot \frac{\hat \sigma_Q}{1 - \sigma_A^2 \rho^\star} \cdot \hat \sigma_B \Bigg) \\
\label{Eqn: Bound on hat L}
    &= \frac{\sqrt{N} \hat \sigma_Q \hat \sigma_B (4 \sqrt{N} \sigma_A \hat \sigma_R + \hat \mu_R (1 + \sigma_A))}{2 \hat \mu_R (1 - \sigma_A^2 \rho^\star)}.
\end{align}
}
$\hspace{-1mm}$We can now prove \eqref{Eqn: IR 2, norm of Pi, diag B(I - (A + BL) transpose) inv, hat L} as follows:
{
\small
\begin{align}
\nonumber
    &\Vert \Pi \ \diag\big\{ B^{i\top} \big( I - (A + BL)^\top \big)^{-1} \big\} \hat L \Vert_2 \\
\nonumber
    \leq \ &\Vert \Pi \Vert_2 \cdot \hat \sigma_B \Vert (I - (A + BL)^\top)^{-1} \Vert_2 \cdot \Vert \hat L \Vert_2 \\
\label{Eqn: Proving IR 2, norm of Pi, diag B(I - (A + BL) transpose) inv, hat L, Step 1}
    \leq \ &\frac{2}{\hat \mu_R} \cdot \hat \sigma_B \cdot \frac{2}{1 - \sigma_A} \cdot \frac{\sqrt{N} \hat \sigma_Q \hat \sigma_B (4 \sqrt{N} \sigma_A \hat \sigma_R + \hat \mu_R (1 + \sigma_A))}{2 \hat \mu_R (1 - \sigma_A^2 \rho^\star)} \\
\nonumber
    \leq \ &\hat \sigma_B^2 \cdot \frac{2\sqrt{N} \hat \sigma_Q (4 \sqrt{N} \sigma_A \hat \sigma_R + \hat \mu_R (1 + \sigma_A))}{\hat \mu_R^2 (1 - \sigma_A)(1 - \sigma_A^2 \rho^\star)} \\
\label{Eqn: Proving IR 2, norm of Pi, diag B(I - (A + BL) transpose) inv, hat L, Step 2}
    \leq \ &\frac{1}{2},
\end{align}
}
\hspace{-1.5mm}where \eqref{Eqn: Proving IR 2, norm of Pi, diag B(I - (A + BL) transpose) inv, hat L, Step 1} follows from applying \eqref{Eqn: IR 3, norm of Pi}, \eqref{Eqn: IR 4 norm of I - (A + BL) transpose inverse}, and \eqref{Eqn: Bound on hat L}, while \eqref{Eqn: Proving IR 2, norm of Pi, diag B(I - (A + BL) transpose) inv, hat L, Step 2} follows from \eqref{Eqn: A4, for Negative Cost of Privacy}.









\end{document}